\numberwithin{equation}{section}
\numberwithin{figure}{section}
\theoremstyle{plain}
\newtheorem{thm}{\protect\theoremname}
\theoremstyle{definition}
\newtheorem{defn}[thm]{\protect\definitionname}
\theoremstyle{plain}
\newtheorem{lem}[thm]{\protect\lemmaname}
\newtheorem{theorem}{Theorem}
\title[NLMC with RVE. Bridging separable and non-separable scales.]
{Nonlocal multicontinua with Representative Volume Elements. Bridging separable and non-separable scales}
\author{Eric T. Chung}
\address{Department of Mathematics, The Chinese University of Hong Kong, Shatin, New Territories, Hong Kong SAR, China}
\author{Y. Efendiev} 
\address{Department of Mathematics, Texas A\&M University, College Station, TX 77843, USA}
\email{efendiev@math.tamu.edu}
\author{Wing T. Leung}
\address{ICES, University of Texas, Austin, TX, USA}
\author{M. Vasilyeva}
\address{Institute for Scientific Computation, Texas A\&M University, College Station, TX, USA  \& Multiscale model reduction laboratory, North-Eastern Federal University, Yakutsk, Russia}
\providecommand{\definitionname}{Definition}
\providecommand{\lemmaname}{Lemma}
\providecommand{\theoremname}{Theorem}
\begin{document}


\maketitle

\begin{abstract}

Recently, several approaches for multiscale simulations for problems with
high contrast and no scale separation are introduced. Among them is
nonlocal multicontinua (NLMC) method, which introduces multiple macroscopic
variables in each computational grid. These approaches explore
the entire coarse block resolution
 and one can obtain optimal convergence results
independent of contrast and scales.
However,
these approaches are not amenable to many multiscale
simulations, where the subgrid effects are much smaller
than the coarse-mesh resolution. For example, molecular dynamics of shale gas
occurs in much smaller length scales compared to the coarse-mesh size,
which is of orders of meters. In this case, one can not explore
the entire coarse-grid resolution in evaluating effective properties.
In this paper, we merge the concepts of nonlocal multicontinua methods
and Representative Volume Element (RVE)
 concepts to explore problems with extreme scale separation.
The first step of this approach is to use sub-grid scale (sub to RVE) to
write a large-scale macroscopic system. We call it intermediate scale
macroscale system. In the next step, we couple
this intermediate macroscale system to the simulation grid model,
which are used in simulations.
This is done using RVE concepts, where we relate intermediate
macroscale variables to the macroscale variables defined on
our simulation
coarse grid. Our intermediate coarse model allows formulating macroscale
variables correctly and coupling them to the simulation grid.
We present the general concept of our approach and present details
of single-phase flow. Some numerical results are presented. For nonlinear
examples, we use machine learning techniques to compute macroscale parameters.

\end{abstract}
\section{Introduction}

In recent years, many multiscale methods have been developed for solving
challenging problems with multiple scales. Some important classes
of multiscale problems include problems with scale separation and
problems without scale separation and high contrast. For problems
with scale separation, approaches such as homogenization \cite{Jikov91},
heterogeneous multiscale methods \cite{ee03}, equation free \cite{rk07},
and so on, are developed. These approaches explore Representative Volume
Element (RVE) computations and use them to compute effective properties.
To demonstrate the main idea of these approaches, we consider
\[
-div\kappa(x,\nabla u)=f,
\]
where $\kappa(x,\cdot)$ has a scale separation. The computational
domain is divided into coarse blocks and at each coarse block
(see Figure \ref{fig:ill1}),
effective property is computed by solving local problems in each
RVE. These local problems are typically formulated as
\[
-div\kappa(x,\nabla \mathcal{N})=0
\]
subject to $\mathcal{N}=\xi\cdot x$, for all $\xi$. The effective
flux is computed as the average $\kappa^*(\xi) = \langle \kappa(x,\nabla \mathcal{N})\rangle$, where the average is taken over RVE.

The multiscale methods for problems without scale separation has been
an active area of research. These methods typically explore the entire
coarse block and some nearby regions to biuld effective properties.
Some original methods in this direction include Multiscale Finite
Element Method (MsFEM) \cite{hw97},
Generalized Multiscale Finite Element Method
(GMsFEM)
\cite{GMsFEM13,gao2015generalized,chung2018fast,WaveGMsFEM,chung2015goal,MixedGMsFEM}, 
Multiscale Finite Volume \cite{hkj12,jennylt03,jennylt05},
Constraint Energy Minimizing GMsFEM (CEM-GMsFEM) \cite{chung2018constraintmixed}, Nonlocal
Multicontinua Approaches (NLMC) \cite{NLMC},
metric-based upscaling \cite{oz06_1}, Heterogeneous Multiscale Method \cite{ee03,abe07}, LOD \cite{henning2012localized}, equation free approaches \cite{rk07,srk05,skr06}, computational continua \cite{fish2005multiscale,fish2010computational,fafalis2018computational}, hierarchical multiscale method \cite{hs05,brown2013efficient,tan2019high}, homogenization-based approaches \cite{cances2015embedded, le2014multiscale,fu2019edge, le2014msfem, chen2019homogenization, chen2019homogenize,salama2017flow},
and so on.
In this paper, we focus on high-contrast problems, where
approaches CEM-GMsFEM \cite{chung2018constraintCMAME,chung2018constraintmixed} 
and NLMC \cite{NLMC} are used to achieve an optimal convergence
independent of contrast and scales. These approaches use multiple
macroscopic variables and oversampling regions to construct coarse-grid
equations. Next, we briefly describe these approaches before
giving main details of our proposed methods, which use concepts
of RVE upscaling and multicontinua nonlocal upscaling.

We briefly describe nonlocal multicontinua approaches.
In these approaches, we first identify macroscopic variables
in each coarse block via local spectral decomposition.
We denote them by $U_i^{(j)}$, where $i$ is coarse-grid block
and $j$ is macroscopic variable in this coarse block.
These macroscopic variables typically represent averages of solutions
over some regions, which can not be localized, such as high-contrast channels
in porous media applications. In the second step, we construct downscaled
maps from macroscopic variables to the fine grid in the region
of influence (typically oversampled regions),
\[
\mathcal{R}:U_i^{(j)} \rightarrow u_f.
\]
Once the maps from macroscopic variables to the fine-grid field are
identified, we seek macroscopic solution (values of macroscopic variables
$U_i^{(j)}$)
such that the downscaled solution solves the global problem in a weak sense.
We will give more detailed description later on.

The use multiple macroscopic variables are critical in multiscale simulations
\cite{NLMC,fafalis2018computational}.
However, our known approaches use entire coarse resolution to compute
macroscale variables, which are not feasible for many applications. For
example, in shale gas applications, gas dynamics is described by
molecular dynamics of multi-component gas particles. The local
simulations are only possible in small RVEs; however, one needs to perform
large-scale simulations for predicting flow in reservoirs. In this paper,
we couple RVE simulations and nonlocal multicontinua approaches to develop
efficient numerical simulations, where we can
partly explore  the scale separation ideas. These problems can occur in many applications, where intermediate
scales are used
to get very large systems and then, explore RVE concepts. Next, we
briefly describe these ideas.

We assume that there are three macroscopic scales. The first scale is denoted
by $h$ and can be regarded as a scale, where we apply nonlocal multicontinua
approach and write down an intermediate macroscale equations for $U_i^{(j)}$,
\[
G_h(U_i^{(j)})=0.
\]
These equations are nonlocal and  very expensive to solve.
In the next step, we use RVE ideas to connect these variables to
macroscale variables $\overline{U}_i^{(j)}$ defined on $H$-size grid, where
we perform computations. We introduce an intermediate coarse-mesh scale,
RVE scale, denoted by $H_{RVE}$, and assume $h\ll H_{RVE} \ll H$.
In the next step, we use RVE computations to connect  ${U}_i^{(j)}$
to $\overline{U}_i^{(j)}$,
\[
 {U}_i^{(j)}=\mathcal{R} (\overline{U}).
\]
This is done via solving RVE problems subject to some constraints that use
$\overline{U}$. Once we define the map, we perform quadrature of macroscale
equations using RVE cells (cf., \cite{ee03}).

In the proposed approaches, we make two major assumptions.
Though the equations on $h$-scale are rigorous, their connections to
$H$-scale equations require (1) periodicity (2) identifying macroscale
variables. As for periodicity assumptions, our approaches are similar to
existing methods, such as HMM, equation free, and so on. However,
defining macroscale variables on $h$-scale and using similar
macroscopic variables
on $H$-scale is one of main advantages, which allow introducing
macroscale variables in a rigorous fashion.

\begin{figure}
\centering
\includegraphics[width=6in]{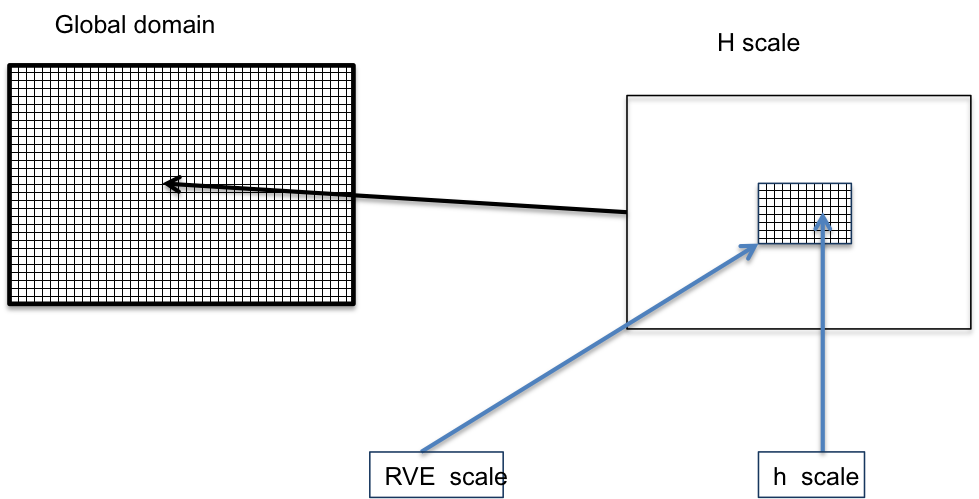}
\caption{Schematic description of the grids}
\label{fig:ill1}
\end{figure}

In the paper, we give an overview of our approach. Our approach
relies on NLMC approach, and for this reason, we first present
this approach. We describe steps of our approach and then give a detailed
study for linear system. We present some convergence result under certain
assumptions.
We also obtain a PDE description of our macroscale equations, which has
integro-differential form.
 We present some numerical results.

The paper is organized as follows. In Section \ref{sec:gen},
 we present a general
concept. Section \ref{sec:detail} is devoted to detaied studies of the linear
case. Finally, in Section \ref{sec:num}, we present some numerical results.

\section{General concept}
\label{sec:gen}

\subsection{NLNLMC on RVE-scale}

We will first follow \cite{chung2018nonlinear,leung2019space} and
present  nonlinear NLMC on RVE scale using the following model nonlinear problem
\begin{equation}
\label{eq:non1}
M U_t + \nabla \cdot G(x, t, U)=g,
\end{equation}
where
$G$ is a nonlinear operator that has a multiscale dependence with respect to
space (and time, in general) and $M$ is a linear operator.
In the above equation, $U$ is the solution and $g$ is a given source term.
In this part, we use the approach proposed in \cite{chung2018nonlinear,leung2019space} on RVE scale, which
is expensive and then apply homogenization idea.

\begin{itemize}

\item {\bf The choice of continua}

The continua serves as our macroscopic variables in each coarse element. Our approach uses
a set of test functions to define the continua.
To be more specific, we consider a coarse element $K_i$.
We will choose a set of test functions $\{ \psi_i^{(j)}(x,t) \}$ to define our continua,
where $j$ denotes the $j$-th continuum.  Using these test functions, we can define
our macroscopic variables as
$$
U_i^{(j)} = \langle \langle U, \psi_i^{(j)} \rangle\rangle
$$
where $\langle \langle \cdot,\cdot \rangle\rangle$ is a space-time inner product.


\item {\bf The construction of local downscaling map}

Our upscale model uses a local downscaling map to bring microscopic information to the coarse grid model.
The proposed downscaling map is a function defined on an oversampling region subject to some constraints
related to the macroscopic variables.
More precisely, we consider a coarse element $K_i$,
and an oversampling region $K_i^+$ such that $K_i \subset K_i^+$. Then we
find a function $\phi$ by
solving the following local problem
\begin{equation}
\label{eq:non11}
M \phi_t + \nabla \cdot G(x,t, \phi)=\mu, \quad\text{in } K_i^+.
\end{equation}
The above equation (\ref{eq:non11}) is
solved subjected to constraints defined by the following functionals
\[
I_\phi(\psi_i^{(j)}(x,t)).
\]
This constraint fixes some averages of $\phi$ with respect to
$\psi_i^{(j)}(x,t)$.
We remark that the function $\mu$ serves as the Lagrange multiplier for the above constraints. This local solution
 builds a downscaling map
\[
\mathcal{F}_i^{ms}:I_\phi(\psi_i^{(j)}(x,t))\rightarrow \phi.
\]

\item  {\bf The construction of coarse scale model}

We will construct the coarse scale model using the test functions $\{ \psi_i^{(j)}(x,t) \}$
and the local downscaling map. Our upscaling solution $U^{ms}$ is defined
as a combination of the local downscaling maps.
To compute $U^{ms}$, we use the following variational formulation
\begin{equation}
\label{eq:nonlinearNLMC}
\langle  \langle M U^{ms}_t + \nabla \cdot G(x,t, U^{ms}),\psi_i^{(j)} \rangle \rangle= \langle  \langle g,\psi_i^{(j)} \rangle \rangle.
\end{equation}
The above equation (\ref{eq:nonlinearNLMC}) is our coarse scale model.

\end{itemize}

\subsection{RVE-based NLNLMC}

We denote by $H$ the coarse-mesh size, where the final computations
are performed.
We denote by $h$ a scale, where we write nonlocal
multicontinua equations; however,
they are very large to solve and will be reduced to $H$-scale.
We denote $H_{RVE}$, the scale of RVE and it is assumed $H\gg H_{RVE}\gg h$.
There is also very fine grid,
which is subgrid of $h$.

\begin{itemize}

\item First, we note that in NLNLMC, we find ${U}_i^{(j)}$ (on $h$-scale) such that
\begin{equation}
\label{eq:nonlinearNLMC_1}
\langle  \langle M U^{ms}_t + \nabla \cdot G(x,t, U^{ms}),\psi_i^{(j)} \rangle \rangle= \langle  \langle g,\psi_i^{(j)} \rangle \rangle,
\end{equation}
where $U^{ms}$ depends on  ${U}_i^{(j)}$, which are defined on $h$-scale.
This equation is very large and we will only use RVE-based solution.

\item Our second goal is to use RVE concept and reduce the dimension of
${U}_i^{(j)}$.
We introduce a coarse-grid
homogenized solution and denote it by $\overline{U}^H$ (defined on $H$-scale)
and write its finite element
expansion
\[
\overline{U}^H=\sum_{i,j}\overline{U}_i^{(j)} \overline{\Phi}_i^{j},
\]
where $\overline{\Phi}_i^{j}$ are standard basis functions, for example,
piecewise linear on $H$-scale.
We seek a reduced map
\[
 {U}_i^{(j)}=\mathcal{R}_H(\overline{U}).
\]
This map can be local or nonlocal, in general. Our construction
is based on homogenization ideas and uses local maps, which we
introduce next. Using these local maps, the quadrature can be approximated
on RVEs. In a linear case, $\mathcal{R}$ is a matrix of the sizes corresponding
to $H^{-d}$ and $h^{-d}$.

In the simplest approach, we will use $\mathcal{R}_H$ to be
$L^2$ projection of $\overline{U}^H$ (i.e., the averages of
$\overline{U}^H$ on $h$-scale mesh).

\item There are various ways to use homogenization ideas to construct reduced map.
Here, we consider some of them, which differ in a way we impose constraints.

{\it Approach 1.} In this approach, we solve RVE-based local problem
with constraints given by $\overline{U}^H$ on each RVE cell to define a map
$\mathcal{R}_h^K$, which is local for each coarse block.

{\it Approach 2.} In this approach, we solve local problems on RVE ($H_{RVE}$-scale)
subject to boundary conditions $U^H$ (to be determined)
\begin{equation}
\label{eq:non12}
M \phi_t + \nabla \cdot G(x,t,\phi )=\mu, \quad\text{in } K_i^{RVE}
\end{equation}
subject to boundary condition and initial condition at $t=t_n$
\[
\phi=\mathcal{R}_H(\overline{U}^H),
\]
where $\phi=\mathcal{R}_h(\mathcal{R}_H(\overline{U}^H))$.
This equation gives a map between the local solution and $\mathcal{R}_H(\overline{U}^H)$. Next,
we define
\[
\langle \phi,\psi_i^{(j)}\rangle,
\]
and the map
\[
\langle \phi,\psi_i^{(j)}\rangle=\mathcal{R}_H(\overline{U}_i^{(j)}).
\]
The map $\mathcal{R}_h$ can be regarded as a reduced dimensional
map mentioned in the previous step.

\item In the last step, we discuss the approximation of the integrals
defined in coarse-grid system and the coarse-grid system.
We seek $\overline{U}$ such that
\[
\langle  \langle M  \mathcal{R}_h( \mathcal{R}_H(\overline{U}))_t + \nabla \cdot G(x,t, \mathcal{R}_h( \mathcal{R}_H(\overline{U})),\psi_i^{(j)} )\rangle \rangle_{RVE} = \langle \langle g, \psi_i^{(j)} \rangle \rangle_{RVE},
\]
where $\psi_i^{(j)}$ are standard (e.g., linear test functions).
\end{itemize}

\subsection{Examples}

{\it Example 1.} We consider
\[
-div \kappa(x,\nabla u) = f,
\]
where $\kappa(x,\xi)$ is monotone with respect to $\xi$ (see \cite{leung2019space}).
The algorithm is the following. We seek $\overline{U}^H=\sum_{i,j}\overline{U}_i^{(j)} \overline{\Phi}_i^{j}$, such that
\[
\int_\Omega \kappa(x,\nabla \mathcal{R}_h^K(\mathcal{R}_H(\overline{U})))\cdot \overline{\Phi}_i^{j}dx = \int_\Omega f \overline{\Phi}_i^{j}dx,
\]
where $\mathcal{R}_H(\overline{U})$ is $L^2$ projection of $\overline{U}$
onto $h$-size mesh in RVE,
and $\mathcal{R}_h(\mathcal{R}_H(\overline{U}))$ is
the local RVE solution defined on the fine grid
 with constraints given by $\mathcal{R}_H(\overline{U})$ (which is defined
on $h$-size mesh),
\[
\int_\Omega \kappa(x,\nabla \mathcal{R}_h(\mathcal{R}_H(\overline{U})))\cdot \overline{\Phi}_i^{j}dx\approx \sum_K \omega_K \int_{K_{RVE}} \kappa(x,\nabla \mathcal{R}_h^K(\mathcal{R}_H(\overline{U})))\cdot \overline{\Phi}_i^{j}dx
\]
\[
\int_\Omega f \overline{\Phi}_i^{j}dx\approx  \sum_K \omega_K \int_{K_{RVE}} f \overline{\Phi}_i^{j}dx.
\]
We again note that $\mathcal{R}_h^K$ is local map,
$\mathcal{R}_h^K:V_h(K_{RVE})\rightarrow V(K_{RVE})$,
and $\mathcal{R}_H:V_H\rightarrow \oplus_KV_h(K_{RVE})$ couples different coarse regions.

{\it Example 2.} We consider a simpler example
\[
-div (\kappa(x,u)\nabla u) = f.
\]
In this case, we can consider a linearization (Picard) as
\[
-div (\kappa(x,u^n)\nabla u^{n+1}) = f.
\]
Then, the algorithm is a special case of {\it Example 1}. We seek
$\overline{U}^{n+1}$, which solves the linearized equations. In this
case, we can also define effective permeabilities as we will do in our
numerical examples.

\section{Detailed study of linear case}
\label{sec:detail}
In this section, we will discuss a linear case in detail. We consider the following variational problem: find $u_{\epsilon}\in V$ such that
\begin{align}
a_{\epsilon}(u_{\epsilon},v)= \int_{\Omega} f v,\;\forall v\in V,
\end{align}
where $a_{\epsilon} : V\times V \rightarrow \mathbb{R}$ is a bilinear form, $\epsilon$ is the size of the microscopic scale, $V$ is a Hilbert space which is compactly embedded in $L^2(\Omega)$, $f$ is a source function in $L^2(\Omega)$ and $\Omega$ is the computational domain in $\mathbb{R}^d$. Before introducing our NLMC upscaling method, we will first define the macroscopic quantities for the solution $u_{\epsilon}$. We assume that there is a set of weighted functions, denoted by $\{ \phi_{j}^{h,\epsilon}(y,x) \}$, which can capture the major features of the solution. Under this assumption, we can define a macroscopic system to compute the macroscopic quantities of the solution. In the following subsections, we will construct the upscaled bilinear operators $\tilde{a}_{ij}$ such that the solution $u_j\in V_j$ with $u_j(x)\approx \int_{\Omega}\phi_{j}^{h,\epsilon}u_{\epsilon}(y,x)dy$ satisfies
\begin{align}
\sum _j \tilde{a}_{ij}(u_j,v_i)= \int_{\Omega} f v_i,\;\forall v_i\in V_i.
\end{align}
By using these equations, we can construct a Galerkin method to compute the numerical solution. Let $\mathcal{T}_{H}$ be a partition of $\Omega$. We consider finite element spaces $V_{H,i}\subset V_{i}$. For example, we can take $V_{H,i}$ to be the piecewise linear finite element space. The numerical solution $u_{H,i} \in V_{H,i}$ is then computed by solving the following equations
\begin{align}
\sum _j \tilde{a}_{ij}(u_{H,j},v_i)= \int_{\Omega} f v_i,\;\forall v_i\in V_{H,i}.
\end{align}
To reduce the computational cost of the method, we will compute the approximate bilinear operators $a^{RVE}_{ij}$ where $a^{RVE}_{ij} \approx \tilde{a}_{ij}$ by using our proposed RVE concept.

\subsection{Construction of the upscaled system}
In this section, we will introduce the construction of the upscaled bilinear operator $\tilde{a}_{ij}$. Let $\mathcal{T}_{h}$ be a partition of $\Omega$ where $h$ is the mesh size of $\mathcal{T}_h$. This $h$ is chosen to be the intermediate scale size which is smaller than the RVE size but larger than the microscopic feature size. In the following discussions, we will consider $\mathcal{T}_{h}$ to be the partition 
\[
\mathcal{T}_h = \{K|\; K= \Big(x_0+nh+[-\cfrac{h}{2},\cfrac{h}{2}]\Big)\cap\Omega \text{ for } n \in \mathbb{Z}^d\}
\]
where $x_0$ is a point in $\Omega$. We remark that, in the RVE case, we construct the partition locally in the RVE and $x_0$ can chosen to be the center of the RVE.
Next, we use the function
$\psi_{j}^{h,\epsilon}(x,y)$ to represent the j-th continuum for the local
region $K(x)=\Big(x+[-\cfrac{h}{2},\cfrac{h}{2}]^{d}\Big)\cap\Omega$
with
$$\sum_{j}\omega_{j}^{h,\epsilon}(x)\psi_{j}^{h,\epsilon}(x,\cdot)=\cfrac{(\tilde{\kappa}^{\epsilon})^{-1}}{|K(x)|}I_{K(x)}$$
and
$$\int_{\Omega}\tilde{\kappa}^{\epsilon}(y)\psi_{k}^{h,\epsilon}(x,y)\psi_{j}^{h,\epsilon}(x,y)dy=\delta_{jk}$$
where  $\tilde{\kappa}^{\epsilon}\in L^{\infty}$ is a weight function with $\tilde{\kappa}^{\epsilon}\geq\alpha_{0}>0$.
Next, we define
$J^{h}(x)=\{y\in\Omega,y_i=x_i+n_{i}h,n\in\mathbb{Z}^{d}\}$, where $x_i$ denotes the $i$-th component of $x$.
By solving the local problem, we obtain basis functions $\phi^{h,\epsilon}_{j}(x,\cdot)\in V$
and $\mu_{j}^{h,\epsilon}(x,\cdot)\in V_{aux}^{h}:=\text{span}_{z\in J^{h}(x)}\{\psi_k^{h,\epsilon}(z,\cdot)\}$
as
\begin{align*}
a_{\epsilon}(\phi_{j}^{h,\epsilon}(x,\cdot),v) & =\int_{\Omega}\tilde{\kappa}^{\epsilon}(y)\mu_{j}^{h,\epsilon}(x,y)v(y)dy, \quad\forall v\in V,\\
\int_{\Omega}\tilde{\kappa}^{\epsilon}(y)\phi_{j}^{h,\epsilon}(x,y)\psi_{k}^{h,\epsilon}(z,y)dy & =\delta(x,z)\delta_{jk},\quad\forall z\in J^{h}(x),
\end{align*}
where $a_{\epsilon}$ is a symmetric positive bilinear operator with
$a_{\epsilon}(u,u)\geq\alpha\|u\|_{L^{2}(\Omega)}^{2},\;\forall\epsilon>0$.
For example, $a_{\epsilon}(\phi,v)=\int_{\Omega}\kappa(\epsilon,y)\nabla\phi(y)\cdot\nabla v(y)dy$.

We define $\tilde{u}^{h,\epsilon}(x_{0},y)=\sum_{x\in J(x_{0})}\sum_{j}u_{j}^{h,\epsilon}(x_{0},x)\phi_{j}^{h,\epsilon}(x,y)$
as
\begin{equation}
\label{eq:u_tilde}
a_{\epsilon}(\sum_{x\in J^{h}(x_{0})}\sum_{j}u_{j}^{h,\epsilon}(x_{0,}x)\phi_{j}^{h,\epsilon}(x,\cdot),\phi_{k}^{h,\epsilon}(z,\cdot))=\int_{\Omega}f(y)\phi_{k}^{h,\epsilon}(z,y)\quad\forall z\in J^{h}(x_{0}).
\end{equation}
By the definition of $\phi_{j}^{h,\epsilon}(x,y)$, we have
\[
a_{\epsilon}(\sum_{x\in J^{h}(x_{0})}\sum_{j}u_{j}^{h,\epsilon}(x_{0},x)\phi_{j}^{h,\epsilon}(x,\cdot),v(\cdot))=\int_{\Omega}\tilde{f}^{h,\epsilon}(x_{0},y)v(y)dy, \;\forall v\in V,
\]
where $\tilde{f}_{k}^{h,\epsilon}(x_{0},y)=\sum_{z\in J^{h}(x_{0})}f_{k}(z)\tilde{\kappa}^{\epsilon}(y)\psi_{k}^{h,\epsilon}(z,y)$
and $f_{k}(z)=\int_{\Omega}f(y)\phi_{k}^{h,\epsilon}(z,y)$.

It is clear that for all $x_{0}$, we have $\tilde{u}^{h,\epsilon}(x_{0},\cdot)\in V$
and $\tilde{f}_{k}^{h,\epsilon}(x_{0},\cdot)\in L^{2}(\Omega)$. Using the idea in \cite{chung2018constraintCMAME}, we have
\[
\|\tilde{u}^{h,\epsilon}(x_{0},\cdot)-u^{\epsilon}\|_{a}\leq C^{\epsilon}(h),\quad\;\forall x_{0},
\]
and
\[
\int_{\Omega}u_{j}^{h,\epsilon}(x_{0},x)\psi_{j}^{h,\epsilon}(x,\cdot)=\int_{\Omega}u^{\epsilon}(y)I_{K(x)}(y),\quad\;\forall x\in J^{h}(x_{0}).
\]
To simplify the notation, we will consider a fixed $x_{0}$ and neglect
the index $x_{0}$, for example,
\[
u_{j}^{h,\epsilon}(x)=u_{j}^{h,\epsilon}(x_{0},x).
\]
In addition, we define two different norms $\|\cdot\|_{a,\epsilon}$ and
$\|\cdot\|_{s,\epsilon}$ as
\begin{align*}
\|u\|_{a,\epsilon}^{2} & =a_{\epsilon}(u,u),
\end{align*}
\begin{align*}
\|u\|_{s,\epsilon}^{2} & =\|u\|_{L^{2}(\tilde{\kappa},\Omega)}^{2}=\int_{\Omega}\tilde{\kappa}^{\epsilon}\mu^{2}.
\end{align*}

\begin{defn}
\label{def:operator}
We define a restriction operator $R_{0,k}^{h}:L^{2}(\Omega)\rightarrow l^{2}(J^{h}(x_{0}))$
and two prolongation operators $P_{0,k}^{h}:l^{2}(J^{h}(x_{0}))\rightarrow L^{2}(\Omega)$,
$P_{1,k}^{h}:l^{2}(J^{h}(x_{0}))\rightarrow V$ by
\[
\Big(R_{0,k}^{h}(u)\Big)(z)=\int_{\Omega}\tilde{\kappa}(y)u(y)\psi_{k}^{h,\epsilon}(z,y)dy
\]
\[
P_{0,k}^{h}(v)=\sum_{z\in J^{h}(x_{0})}v(z)\psi_{k}^{h,\epsilon}(z,y)
\]
and
\[
P_{1,k}^{h}(v)=\sum_{z\in J^{h}(x_{0})}v(z)\phi_{k}^{h,\epsilon}(z,y)
\]
Moreover, we define an operator $\Pi_{k}^{h}:L^{2}(\Omega) \rightarrow L^{2}(\Omega)$
such that $\Pi_{k}^{h}=P_{0,k}^{h}\circ R_{0,k}^{h}$.
We can easily check that $\sum_{k}\Pi_{k}^{h}(u)=u$ for all $u\in\text{span}_{z\in J^{h}(x)}\{\psi^{h,\epsilon}_k(z,\cdot)\}$.
\end{defn}

Next, we make the following assumption.

\noindent
{\bf Assumption 1}: For all $\mu\in\text{span}_{z\in J^{h}(x)}\{\psi_k^{h,\epsilon}(z,\cdot)\}$,
there exists $v\in V$ with $\text{supp}\{v\}\subset\text{supp}\{\mu\}$ such that
$$\int_{\Omega}\tilde{\kappa}^{\epsilon}\mu v\geq c^{\epsilon}(h)\Big(\int_{\Omega}\tilde{\kappa}^{\epsilon}\mu^{2}\Big)^{\frac{1}{2}}\Big(a_{\epsilon}(v,v)\Big)^{\frac{1}{2}}$$
and
\[
\Big(\int_{\Omega}\tilde{\kappa}^{\epsilon}\Big|(I-\Pi_{k}^{h})u\Big|^{2}\Big)^{\frac{1}{2}}\leq C^{\epsilon}(h)\Big(a_{\epsilon}(v,v)\Big)^{\frac{1}{2}}
\]
where $C^{\epsilon}$, $c^{\epsilon}$ are monotonic decreasing functions with respect to $\epsilon$. We also assume that
there is a function $\beta:\mathbb{R}\rightarrow\mathbb{R}$ such
that $C^{\epsilon}(\beta(\epsilon))\rightarrow0$ and $\beta(\epsilon)\rightarrow0$.

We next prove the following properties for the operators $R^h_{0,k}, P^h_{0,k}$ and $P^h_{1,k}$.

\begin{lem}
For the operators $R^h_{0,k}, P^h_{0,k}$ and $P^h_{1,k}$ defined in {\bf Definition \ref{def:operator}}, the following hold
\[
\|R_{0,k}^{h}\|_{L^{2}(\tilde{\kappa})}=1,
\]
\[
\|P_{0,k}^{h}(v)\|_{L^{2}(\Omega)}=\|v\|_{l^{2}}, \quad\;\forall v\in l^{2}(J^{h}(x_{0})),
\]
\[
\|P_{1,k}^{h}(v)\|_{a,\epsilon}\leq c^{\epsilon}(h)^{-1}\|v\|_{l^{2}},\quad\;\forall v\in l^{2}(J^{h}(x_0)).
\]
In addition, we have
\[
\|P_{1,k}^{h}\circ R_{0,k}^{h}(u)\|_{a,\epsilon}\leq\|u\|_{a,\epsilon}, \quad\;\forall u\in V.
\]
\end{lem}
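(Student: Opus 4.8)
The plan is to dispatch the first three statements as direct consequences of the $L^{2}(\tilde{\kappa}^{\epsilon})$-orthonormality of the family $\{\psi_{k}^{h,\epsilon}(z,\cdot)\}_{z\in J^{h}(x_{0})}$ together with the two defining (saddle-point) equations for $\phi_{k}^{h,\epsilon}$, and to reserve the real work for the final contraction estimate, which I would obtain by recognizing $P_{1,k}^{h}\circ R_{0,k}^{h}$ as an $a_{\epsilon}$-orthogonal projection. Throughout I use that, for fixed $k$, the functions $\psi_{k}^{h,\epsilon}(z,\cdot)$ have pairwise disjoint supports $K(z)$ across $z\in J^{h}(x_{0})$ and unit weighted norm, so $\int_{\Omega}\tilde{\kappa}^{\epsilon}\psi_{k}^{h,\epsilon}(z,\cdot)\psi_{j}^{h,\epsilon}(z',\cdot)=\delta(z,z')\delta_{jk}$.

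For $R_{0,k}^{h}$, the components $(R_{0,k}^{h}u)(z)=\int_{\Omega}\tilde{\kappa}^{\epsilon}u\,\psi_{k}^{h,\epsilon}(z,\cdot)$ are exactly the generalized Fourier coefficients of $u$ against this orthonormal system, so Bessel's inequality gives $\|R_{0,k}^{h}u\|_{l^{2}}\le\|u\|_{s,\epsilon}$, and equality at $u=\psi_{k}^{h,\epsilon}(z_{0},\cdot)$ forces the operator norm to be $1$. The identity for $P_{0,k}^{h}$ is the dual statement: expanding $\|\sum_{z}v(z)\psi_{k}^{h,\epsilon}(z,\cdot)\|_{s,\epsilon}^{2}$ and using orthonormality annihilates all cross terms and leaves $\sum_{z}v(z)^{2}$, so $P_{0,k}^{h}$ is an isometry in the weighted norm. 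For $P_{1,k}^{h}$, set $w=P_{1,k}^{h}v=\sum_{z}v(z)\phi_{k}^{h,\epsilon}(z,\cdot)$; the first defining equation and linearity give $a_{\epsilon}(w,v')=\int_{\Omega}\tilde{\kappa}^{\epsilon}\nu\,v'$ for all $v'\in V$, with $\nu=\sum_{z}v(z)\mu_{k}^{h,\epsilon}(z,\cdot)\in V_{aux}^{h}$. Taking $v'=w$ and using the biorthogonality $R_{0,j}^{h}w=\delta_{jk}v$ recorded below, $\|w\|_{a,\epsilon}^{2}=\int_{\Omega}\tilde{\kappa}^{\epsilon}\nu\,w$ collapses to the $l^{2}$-pairing of $v$ with the $k$-th continuum coefficients of $\nu$, whence $\|w\|_{a,\epsilon}^{2}\le\|\nu\|_{s,\epsilon}\|v\|_{l^{2}}$. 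The inf-sup half of Assumption 1, applied through $\sup_{v'}\int_{\Omega}\tilde{\kappa}^{\epsilon}\nu v'/\|v'\|_{a,\epsilon}=\|w\|_{a,\epsilon}$, yields $\|\nu\|_{s,\epsilon}\le c^{\epsilon}(h)^{-1}\|w\|_{a,\epsilon}$, and combining the two estimates gives $\|w\|_{a,\epsilon}\le c^{\epsilon}(h)^{-1}\|v\|_{l^{2}}$.

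The heart of the lemma is the contraction $\|P_{1,k}^{h}\circ R_{0,k}^{h}u\|_{a,\epsilon}\le\|u\|_{a,\epsilon}$. First I record the biorthogonality from the second defining equation, $\int_{\Omega}\tilde{\kappa}^{\epsilon}\phi_{k}^{h,\epsilon}(z,\cdot)\psi_{k}^{h,\epsilon}(z',\cdot)=\delta(z,z')$, which says precisely that $R_{0,k}^{h}\circ P_{1,k}^{h}=\mathrm{Id}$ on $l^{2}(J^{h}(x_{0}))$. Hence $\Pi:=P_{1,k}^{h}\circ R_{0,k}^{h}$ is idempotent, a projection onto $W_{k}:=\text{span}_{z}\{\phi_{k}^{h,\epsilon}(z,\cdot)\}$. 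The plan is then to show $\Pi$ is the $a_{\epsilon}$-orthogonal projection onto $W_{k}$, from which the bound is immediate: for an $a_{\epsilon}$-orthogonal projection $\|\Pi u\|_{a,\epsilon}^{2}=a_{\epsilon}(\Pi u,u)\le\|\Pi u\|_{a,\epsilon}\|u\|_{a,\epsilon}$ by Cauchy-Schwarz, giving $\|\Pi u\|_{a,\epsilon}\le\|u\|_{a,\epsilon}$. To establish orthogonality I would test the error against a basis element via the first defining equation, $a_{\epsilon}(u-\Pi u,\phi_{k}^{h,\epsilon}(z,\cdot))=\int_{\Omega}\tilde{\kappa}^{\epsilon}\mu_{k}^{h,\epsilon}(z,\cdot)\,(u-\Pi u)$, and exploit that $R_{0,k}^{h}(u-\Pi u)=R_{0,k}^{h}u-R_{0,k}^{h}u=0$, since $R_{0,k}^{h}\circ\Pi=R_{0,k}^{h}$.

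The main obstacle is exactly this last vanishing. Since $\mu_{k}^{h,\epsilon}(z,\cdot)\in V_{aux}^{h}$, the right-hand side equals a sum of the paired measurements $R_{0,j}^{h}(u-\Pi u)$ weighted by the expansion coefficients of the multiplier across continua; the $k$-th continuum term is killed, but the contributions from $j\ne k$ satisfy only $R_{0,j}^{h}(u-\Pi u)=R_{0,j}^{h}u$ (because $R_{0,j}^{h}\Pi u=0$ for $j\ne k$ by biorthogonality) and need not vanish a priori. I therefore expect the bulk of the argument to consist in showing that the Lagrange multiplier $\mu_{k}^{h,\epsilon}(z,\cdot)$ couples only to the $k$-th continuum, equivalently that the constraint Gram system defining $\phi_{k}^{h,\epsilon}$ decouples across continua, so that $R_{0,k}^{h}(u-\Pi u)=0$ alone suffices. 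If that decoupling is not available, the clean statement is instead recovered for the summed operator $\sum_{k}P_{1,k}^{h}\circ R_{0,k}^{h}$, whose orthogonality test picks up $\sum_{j}R_{0,j}^{h}(u-\Pi u)=0$ automatically. Pinning down this coupling structure of the multiplier is the step I expect to require the most care.
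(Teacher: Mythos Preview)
Your treatment of the first three statements matches the paper's almost exactly: orthonormality of the $\psi_{k}^{h,\epsilon}(z,\cdot)$ for the first two, and for the third the saddle-point identity $a_{\epsilon}(P_{1,k}^{h}u,P_{1,k}^{h}u)=\sum_{z}\mu_{k}(z)u(z)$ followed by the inf--sup half of Assumption~1 to control the multiplier. Your version applies the inf--sup to the full multiplier $\nu$ and then discards the $j\ne k$ components, while the paper isolates $\mu_{k}$ first; the two are equivalent.

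For the fourth inequality the paper takes the dual formulation of your orthogonal-projection argument: it simply asserts that
\[
P_{1,k}^{h}\circ R_{0,k}^{h}(u)=\operatorname*{argmin}_{v\in V}\bigl\{a_{\epsilon}(v,v)\,:\,R_{0,k}^{h}(v)=R_{0,k}^{h}(u)\bigr\},
\]
and since $u$ itself is feasible, the bound is immediate. This is precisely the variational restatement of ``$\Pi$ is the $a_{\epsilon}$-orthogonal projection onto $W_{k}$.'' The paper does \emph{not} address the obstacle you identified: for this minimization characterization (with only the $k$-th constraint) to hold, the Lagrange multiplier attached to $P_{1,k}^{h}(v)$ must lie purely in $\mathrm{span}_{z}\{\psi_{k}^{h,\epsilon}(z,\cdot)\}$, i.e., the $\mu_{j}$ with $j\ne k$ must vanish. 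That is exactly the decoupling you flagged. The paper takes it as given in a single line and offers no further justification, so your proposal is in fact more scrupulous than the paper's own proof on this point; you have not missed an idea, you have located where one is being assumed.
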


\begin{proof}
By the orthogonality of $\psi_{k}^{h,\epsilon}(z,\cdot)$, we obtain
the first two inequalities. For the third inequality, we note that $P_{1,k}^{h}(u)$
satisfies
\begin{align*}
a_{\epsilon}(P_{1,k}^{h}(u),v) & =\sum_{j,z}\mu_{j}(z)\int_{\Omega}\tilde{\kappa}^{\epsilon}\psi_{j}^{h,\epsilon}(z,y)v(y)dy, \quad\forall v\in V, \\
\int_{\Omega}\tilde{\kappa}^{\epsilon}P_{1,k}^{h}(u)\psi_{j}^{h,\epsilon}(z,y)dy & =\delta_{jk}u(z),\quad\forall z\in J^{h}(x).
\end{align*}
Therefore, we have
\begin{align*}
a_{\epsilon}(P_{1,k}^{h}(u),P_{1,k}^{h}(u)) & =\sum_{j,z}\mu_{j}(z)\int_{\Omega}\tilde{\kappa}^{\epsilon}\psi_{j}^{h,\epsilon}(z,y)P_{1,k}^{h}(u)dy\\
 & =\sum_{z}\mu_{k}(z)u(z)\leq\|\mu_{k}\|_{l^{2}}\|u\|_{l^{2}}.
\end{align*}
By {\bf Assumption 1}, there is $v\in V$ such that
$$\sum_{z}\mu_{k}(z)\int_{\Omega}\tilde{\kappa}^{\epsilon}\psi_{k}^{h,\epsilon}(z,y)vdy\geq c^\epsilon(h) \|\sum_{z}\mu_{k}(z)\psi_{k}^{h,\epsilon}(z,\cdot)\|_{s,\epsilon}\|v\|_{a,\epsilon}=c^\epsilon(h)\|\mu_{k}\|_{l^{2}}\|v\|_{a,\epsilon},$$
so, we have 
\begin{align*}
\|\mu_{k}\|_{l^{2}} & \leq\cfrac{\sum_{z}\mu_{k}(z)\int_{\Omega}\tilde{\kappa}^{\epsilon}\psi_{k}^{h,\epsilon}(z,y)vdy}{c^{\epsilon}(h)\|v\|_{a,\epsilon}}\\
 & =\cfrac{a_{\epsilon}(P_{1,k}^{h}(u),v)}{c^{\epsilon}(h)\|v\|_{a,\epsilon}}\leq c^{\epsilon}(h)^{-1}\|P_{1,k}^{h}(u)\|_{a,\epsilon}.
\end{align*}
To obtain the fourth inequality, we note that $P_{1,k}^{h}\circ R_{0,k}^{h}(u)$
is the solution of the following minimization problem
\[
P_{1,k}^{h}\circ R_{0,k}^{h}(u)=\underset{v\in V}{\text{argmin}}\{a_{\epsilon}(v,v)|\;R_{0,k}^{h}(v)=R_{0,k}^{h}(u)\}.
\]
This completes the proof of the lemma.
\end{proof}

The following lemma gives an error estimate for $\tilde{u}^{h,\epsilon}$ defined in (\ref{eq:u_tilde}).

\begin{lem}
For the function $\tilde{u}^{h,\epsilon}$ defined in (\ref{eq:u_tilde}),
we have
\[
\|\tilde{u}^{h,\epsilon}-u^{\epsilon}\|_{a,\epsilon}=\|\sum_{k}P_{1,k}^{h}\circ R_{0,k}^{h}(u^{\epsilon})-u^{\epsilon}\|_{a,\epsilon}\leq C^{\epsilon}(h)\|(\kappa^{\epsilon})^{-\frac{1}{2}}f\|_{L^{2}(\Omega)}
\]
and
\[
\|\sum_{x\in J^{h}}u_{j}^{h,\epsilon}(x)\psi_{j}^{h,\epsilon}(x,\cdot)-u^{\epsilon}\|_{s,\epsilon}=\|\sum_{k}\Pi_{k}^{h}(u^{\epsilon})-u^{\epsilon}\|_{s,\epsilon}\leq C^{\epsilon}(h)\|u^{\epsilon}\|_{a,\epsilon}.
\]
\end{lem}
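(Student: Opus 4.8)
The plan is to identify $\tilde u^{h,\epsilon}$ with an energy-orthogonal projection and then run the standard CEM-type argument, with the two quantitative bounds coming entirely from the approximation property in \textbf{Assumption 1}.

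First I would derive Galerkin orthogonality. Since $u^{\epsilon}$ solves the continuous problem, testing it against $\phi_k^{h,\epsilon}(z,\cdot)$ gives $a_{\epsilon}(u^{\epsilon},\phi_k^{h,\epsilon}(z,\cdot))=\int_{\Omega}f\,\phi_k^{h,\epsilon}(z,\cdot)$; subtracting this from (\ref{eq:u_tilde}) yields $a_{\epsilon}(\tilde u^{h,\epsilon}-u^{\epsilon},w)=0$ for every $w$ in the multiscale space $V_{ms}:=\mathrm{span}_{z\in J^{h}(x_0),k}\{\phi_k^{h,\epsilon}(z,\cdot)\}$. Thus $\tilde u^{h,\epsilon}$ is the $a_{\epsilon}$-orthogonal projection of $u^{\epsilon}$ onto $V_{ms}$.

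Next I would set up the orthogonal splitting. Let $\tilde V:=\{v\in V:R_{0,k}^{h}(v)=0 \text{ for all }k\}$, which is exactly the set of functions $\tilde\kappa^{\epsilon}$-orthogonal to $V_{aux}^{h}$. Because each Lagrange multiplier $\mu_j^{h,\epsilon}(x,\cdot)$ lies in $V_{aux}^{h}$, the first defining relation of $\phi_j^{h,\epsilon}$ gives $a_{\epsilon}(\phi_j^{h,\epsilon}(x,\cdot),v)=\int_{\Omega}\tilde\kappa^{\epsilon}\mu_j^{h,\epsilon}v=0$ for all $v\in\tilde V$, so $V_{ms}\subset\tilde V^{\perp_a}$. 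The biorthogonality constraint makes the $\phi_k^{h,\epsilon}(z,\cdot)$ linearly independent, while the inf-sup inequality in \textbf{Assumption 1} makes the continuum-value map $v\mapsto(R_{0,k}^{h}(v))_{k,z}$ onto; a dimension count then upgrades the inclusion to $V_{ms}=\tilde V^{\perp_a}$, i.e. $V=\tilde V\oplus_a V_{ms}$. Consequently the residual $e:=u^{\epsilon}-\tilde u^{h,\epsilon}$ lies in $V_{ms}^{\perp_a}=\tilde V$, so $R_{0,k}^{h}(\tilde u^{h,\epsilon})=R_{0,k}^{h}(u^{\epsilon})$ and in particular $u_j^{h,\epsilon}(x)=R_{0,j}^{h}(u^{\epsilon})(x)$. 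Substituting these preserved continuum values into the definitions of $P_{1,k}^{h}$ and $\Pi_k^{h}=P_{0,k}^{h}\circ R_{0,k}^{h}$ gives the two asserted equalities $\tilde u^{h,\epsilon}=\sum_k P_{1,k}^{h}\circ R_{0,k}^{h}(u^{\epsilon})$ and $\sum_{x}\sum_j u_j^{h,\epsilon}(x)\psi_j^{h,\epsilon}(x,\cdot)=\sum_k\Pi_k^{h}(u^{\epsilon})$.

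Finally I would close the estimates. For the energy bound, orthogonality and the PDE give $\|e\|_{a,\epsilon}^{2}=a_{\epsilon}(u^{\epsilon},e)=\int_{\Omega}f e$, and Cauchy--Schwarz in the $\tilde\kappa^{\epsilon}$-weighted $L^2$ inner product bounds the right side by $\|(\kappa^{\epsilon})^{-1/2}f\|_{L^2(\Omega)}\|e\|_{s,\epsilon}$. Since $e\in\tilde V$ we have $\sum_k\Pi_k^{h}e=0$, so the approximation property in \textbf{Assumption 1} (read as $\|(I-\sum_k\Pi_k^{h})u\|_{s,\epsilon}\le C^{\epsilon}(h)\|u\|_{a,\epsilon}$) yields $\|e\|_{s,\epsilon}=\|(I-\sum_k\Pi_k^{h})e\|_{s,\epsilon}\le C^{\epsilon}(h)\|e\|_{a,\epsilon}$; dividing by $\|e\|_{a,\epsilon}$ gives the first inequality. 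The second inequality is the same property applied directly to $u^{\epsilon}$. The step I expect to be the main obstacle is the orthogonal splitting: one must verify that the constraint-energy-minimizing basis spans the \emph{entire} complement $\tilde V^{\perp_a}$ rather than a proper subspace, so that the residual is genuinely forced into $\tilde V$; this is precisely where the non-degeneracy furnished by the inf-sup part of \textbf{Assumption 1} is indispensable, and some care is needed to match the weight $\tilde\kappa^{\epsilon}$ appearing in $\|\cdot\|_{s,\epsilon}$ with the coefficient controlling $a_{\epsilon}$.
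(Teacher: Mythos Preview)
Your proposal is correct and follows essentially the same route the paper intends: the paper's own proof is just the two-line statement that the first inequality comes from the CEM-GMsFEM framework in \cite{chung2018constraintCMAME} and the second is a direct consequence of \textbf{Assumption 1}, and what you have written is precisely the standard CEM argument (Galerkin orthogonality, the $a_{\epsilon}$-orthogonal decomposition $V=\tilde V\oplus_a V_{ms}$, and the approximation property) unpacked in detail. One minor remark: your ``dimension count'' phrasing for $V_{ms}=\tilde V^{\perp_a}$ is informal since $V$ is infinite-dimensional; the clean argument is that for any $w\in V$ the element $\sum_k P_{1,k}^h\circ R_{0,k}^h(w)\in V_{ms}$ reproduces all auxiliary values of $w$, so $w-\sum_k P_{1,k}^h\circ R_{0,k}^h(w)\in\tilde V$, which together with $V_{ms}\perp_a\tilde V$ gives the splitting directly.
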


\begin{proof}
The first inequality is obtained by the concepts developed in \cite{chung2018constraintCMAME}.
The second inequality is a direct consequence of {\bf Assumption 1}.
\end{proof}

Now, we present our upscaled quantities.

\begin{defn}
We define $\overline{\kappa}_{jk}^{h,\epsilon}$ and $\tilde{\kappa}_{jk}^{h,\epsilon}$
as
\[
\overline{\kappa}_{jk}^{h,\epsilon}(x,z)=a_{\epsilon}(\phi_{j}^{h,\epsilon}(x,\cdot),\phi_{k}^{h,\epsilon}(z,\cdot)), \quad\;\forall x,y\in J^{h},
\]
and
\[
\tilde{\kappa}_{jk}^{h,\epsilon}(x,z)=\sum_{\tilde{x}\in J}\sum_{\tilde{z}\in J}\psi_{j}^{h,\epsilon}(\tilde{x},x)\overline{\kappa}_{jk}^{h,\epsilon}(\tilde{x},\tilde{z})\psi_{k}^{h,\epsilon}(\tilde{z},z).
\]
We remark that for all $u,v\in L^{2}(\Omega)$, we have
\[
\int_{\Omega}\int_{\Omega}v(z)\tilde{\kappa}^{\epsilon}(z)\tilde{\kappa}_{jk}^{h,\epsilon}(x,z)\tilde{\kappa}^{\epsilon}(x)u(x)=\sum_{x,z\in J}R_{0,k}^{h}(u)(x)\overline{\kappa}_{jk}^{h,\epsilon}(x,z)R_{0,j}^{h}(v)(x).
\]
Thus we have
\[
\sum_{j}\int_{\Omega}\int_{\Omega}v(y)\tilde{\kappa}^{\epsilon}(y)\tilde{\kappa}_{jk}^{h,\epsilon}(x,y)\tilde{\kappa}^{\epsilon}(x)\tilde{u}^{h,\epsilon}(x)=\int_{\Omega}\tilde{f}_{k}^{h,\epsilon}(x_{0},y)v(y)dy, \quad\;\forall v\in V.
\]
Using $R_{0,j}^{h}(\tilde{u}^{h,\epsilon}(x))=R_{0,j}^{h}(u^{\epsilon}(x))$, we obtain
\[
\sum_{j}\int_{\Omega}\int_{\Omega}v(y)\tilde{\kappa}^{\epsilon}(y)\tilde{\kappa}_{jk}^{h,\epsilon}(x,y)\tilde{\kappa}^{\epsilon}(x)u^{\epsilon}(x)=\int_{\Omega}\tilde{f}_{k}^{h,\epsilon}(y)v(y)dy, \quad\;\forall v\in V.
\]

We remark that the bilinear operator $\tilde{a}^{h,\epsilon}$
defined as
$$\tilde{a}^{h,\epsilon}(u,v):=\int_{\Omega}\int_{\Omega}v(y)\tilde{\kappa}^{\epsilon}(y)\tilde{\kappa}_{jk}^{h,\epsilon}(x,y)\tilde{\kappa}^{\epsilon}(x)\tilde{u}^{h,\epsilon}(x)$$
is not positive neither in $L^{2}(\Omega)$ nor in $V$. On the other hand, one can easily prove
that the bilinear operator $\tilde{a}^{h,\epsilon}$ is positive in
$V_{aux}^{h}$ and $V^{h}=\text{span}\{\phi_{j}^{h,\epsilon}\}$ where
$\cup_{j=1}^{\infty}V_{aux}^{h_{j}}=L^{2}(\Omega)$, $\cup_{j=1}^{\infty}V^{h_{j}}\subset V$
for any sequence $h_{j}\rightarrow 0$.
\end{defn}

For the next result, we make the following assumptions.


 \noindent
 {\bf Assumption 2}:
 We assume that, there exist $\tilde{V}$
such that for all $f\in L^{2}(\Omega)$, the solution $\Pi_{k}^{\beta(\epsilon)}u^{\epsilon}\rightarrow u_{k}^{0}$
in $L^{2}(\Omega)$ and $u_{k}^{0}\in\tilde{V}_{k}$ satisfies
\[
\sum_{k}a_{j,k}^{0}(u_{k}^{0},v_{j})=(f_{j}^{0},v_{j}), \quad\;\forall v\in\tilde{V}_{j},
\]
for some $f^{0}_j$.

\noindent
{\bf Assumption 3}:
We assume that, there exist $\tilde{V}$
such that for all $f\in L^{2}(\Omega)$, $\Pi_{k}^{\beta(\epsilon)}\tilde{u}^{\beta(\epsilon),\epsilon}\rightarrow\tilde{u}_{k}^{0}$
in $L^{2}(\Omega)$ and $\tilde{u}_{k}^{0}\in\tilde{V}_{k}$ satisfies
\[
\sum_{k}\tilde{a}_{j,k}^{0}(\tilde{u}_{k}^{0},v_{j})=(\tilde{f}_{j}^{0},v), \quad\;\forall v\in\tilde{V}_{j},
\]
for some $\tilde{f}_j^{0}$.

\begin{lem}
{\bf Assumption 2} holds if and only if {\bf Assumption 3} holds.
\end{lem}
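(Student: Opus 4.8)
The plan is to reduce the equivalence to a single analytic fact: the two families $\Pi_k^{\beta(\epsilon)}u^{\epsilon}$ and $\Pi_k^{\beta(\epsilon)}\tilde{u}^{\beta(\epsilon),\epsilon}$ differ by a quantity that vanishes in $L^{2}(\Omega)$ as $\epsilon\to0$. Once this is established, the two families converge (or fail to converge) together and to a common limit, so the homogenized weak formulation posited in one assumption is literally witnessed by the same data in the other. The heart of the argument is therefore a convergence estimate, while the transfer of the limiting equation will turn out to be essentially tautological.

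First I would specialize the error estimate proved above to the diagonal choice $h=\beta(\epsilon)$, which gives
\[
\|\tilde{u}^{\beta(\epsilon),\epsilon}-u^{\epsilon}\|_{a,\epsilon}\leq C^{\epsilon}(\beta(\epsilon))\,\|(\kappa^{\epsilon})^{-1/2}f\|_{L^{2}(\Omega)}.
\]
By \textbf{Assumption 1} the factor $C^{\epsilon}(\beta(\epsilon))\to0$, while $\|(\kappa^{\epsilon})^{-1/2}f\|_{L^{2}(\Omega)}\leq C\|f\|_{L^{2}(\Omega)}$ stays bounded because the coefficient is bounded below. Invoking the coercivity $a_{\epsilon}(u,u)\geq\alpha\|u\|_{L^{2}(\Omega)}^{2}$ then upgrades this to $\|\tilde{u}^{\beta(\epsilon),\epsilon}-u^{\epsilon}\|_{L^{2}(\Omega)}\to0$.

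Next I would propagate the estimate through the projection $\Pi_{k}^{\beta(\epsilon)}=P_{0,k}^{\beta(\epsilon)}\circ R_{0,k}^{\beta(\epsilon)}$. The stability bounds established above, namely $\|P_{0,k}^{h}(v)\|_{L^{2}(\Omega)}=\|v\|_{l^{2}}$ and $\|R_{0,k}^{h}\|_{L^{2}(\tilde{\kappa})}=1$, show that $\Pi_{k}^{h}$ is bounded on $L^{2}(\Omega)$ uniformly in $h$ and $\epsilon$, up to the factor $\|\tilde{\kappa}^{\epsilon}\|_{L^{\infty}}^{1/2}$ relating $L^{2}(\tilde{\kappa})$ to $L^{2}$. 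Applying this to $u^{\epsilon}-\tilde{u}^{\beta(\epsilon),\epsilon}$ yields, for each continuum index $k$,
\[
\|\Pi_{k}^{\beta(\epsilon)}u^{\epsilon}-\Pi_{k}^{\beta(\epsilon)}\tilde{u}^{\beta(\epsilon),\epsilon}\|_{L^{2}(\Omega)}\to0.
\]

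Finally I would assemble the equivalence. Assume \textbf{Assumption 2} holds, with witnessing space $\tilde{V}$, limits $u_{k}^{0}\in\tilde{V}_{k}$, bilinear forms $a_{j,k}^{0}$, and sources $f_{j}^{0}$. By the previous step, $\Pi_{k}^{\beta(\epsilon)}\tilde{u}^{\beta(\epsilon),\epsilon}$ converges in $L^{2}(\Omega)$ to the \emph{same} limit $u_{k}^{0}\in\tilde{V}_{k}$; hence setting $\tilde{u}_{k}^{0}:=u_{k}^{0}$, $\tilde{a}_{j,k}^{0}:=a_{j,k}^{0}$ and $\tilde{f}_{j}^{0}:=f_{j}^{0}$ verifies \textbf{Assumption 3} with the identical weak formulation, and the reverse implication is symmetric. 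The main obstacle is thus entirely concentrated in the convergence step: controlling $\|(\kappa^{\epsilon})^{-1/2}f\|_{L^{2}(\Omega)}$ uniformly and securing the scale-uniform $L^{2}$-stability of $\Pi_{k}^{h}$, both of which reduce to the lower bound on the weight and the operator bounds already proved. The equation-transfer is then free precisely because the two projected solutions share a limit.
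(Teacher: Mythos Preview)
Your overall strategy---show that the two projected families have the same $L^{2}$ limit, then transfer the limiting weak formulation verbatim---is exactly the paper's. The difference is in how you establish that $\Pi_{k}^{\beta(\epsilon)}\tilde{u}^{\beta(\epsilon),\epsilon}$ and $\Pi_{k}^{\beta(\epsilon)}u^{\epsilon}$ share a limit.

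The paper does not pass through the energy error estimate at all. It uses the \emph{exact} identity
\[
\Pi_{k}^{\beta(\epsilon)}\tilde{u}^{\beta(\epsilon),\epsilon}=\Pi_{k}^{\beta(\epsilon)}u^{\epsilon},
\]
which holds because, by the constraint equations in the construction of $\phi_{j}^{h,\epsilon}$, one has $R_{0,k}^{h}(\tilde{u}^{h,\epsilon})=R_{0,k}^{h}(u^{\epsilon})$ for every $h$ (this is stated just before the remark following the definition of $\tilde{\kappa}_{jk}^{h,\epsilon}$, and is also implicit in the equality $\tilde{u}^{h,\epsilon}=\sum_{k}P_{1,k}^{h}\circ R_{0,k}^{h}(u^{\epsilon})$ of the preceding lemma). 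With the projections literally equal, the convergence of one family is trivially equivalent to the convergence of the other, and the transfer of $(\tilde{V}_{k},a_{j,k}^{0},f_{j}^{0})$ to $(\tilde{V}_{k},\tilde{a}_{j,k}^{0},\tilde{f}_{j}^{0})$ is the identity, exactly as you wrote in your last paragraph.

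Your asymptotic route via the $a$-norm error bound and coercivity would also work, but it carries a small technical debt: to push the $L^{2}$ convergence through $\Pi_{k}^{h}$ you need $\Pi_{k}^{h}$ bounded from $L^{2}(\Omega)$ to $L^{2}(\Omega)$ uniformly in $\epsilon$, and the operator bounds you quote only give $\|\Pi_{k}^{h}w\|_{L^{2}}\leq\|w\|_{L^{2}(\tilde{\kappa}^{\epsilon})}$. This forces a uniform upper bound on $\tilde{\kappa}^{\epsilon}$, which the paper does not explicitly assume. Using the exact identity sidesteps this entirely.
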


\begin{proof}
We assume {\bf Assumption 2} holds. First, we let $\tilde{u}_{k}^{0}=u_{k}^{0}$
and obtain
\begin{align*}
\|\Pi_{k}^{\beta(\epsilon)}(\tilde{u}^{h,\epsilon})-\tilde{u}_{k}^{0}\|_{L^{2}(\Omega)} & =\|\Pi_{k}^{\beta(\epsilon)}(\tilde{u}^{h,\epsilon})-u_{k}^{0}\|_{L^{2}(\Omega)}\\
 & \leq\|\Pi_{k}^{\beta(\epsilon)}\tilde{u}^{h,\epsilon}-\Pi_{k}^{\beta(\epsilon)}u^{\epsilon}\|_{L^{2}(\Omega)}+\|\tilde{u}_{k}^{0}-\Pi_{k}^{\beta(\epsilon)}u^{\epsilon}\|_{L^{2}(\Omega)}\\
 & =\|\tilde{u}_{k}^{0}-\Pi_{k}^{\beta(\epsilon)}u^{\epsilon}\|_{L^{2}(\Omega)}\rightarrow0.
\end{align*}
We consider $\tilde{a}_{j,k}^{0}(u,v)=a_{j,k}^{0}(u,v)\;\forall u\in\tilde{V}_{k},v\in\tilde{V}_{j}$
and $\tilde{f}_{j}^{0}=f_{j}^{0}$. Therefore, we obtain
\begin{align*}
\sum_{k}\tilde{a}_{j,k}^{0}(\tilde{u}_{k}^{0},v_{j}) & =\sum_{k}a_{j,k}^{0}(u_{k}^{0},v_{j})=(f_{j}^{0},v_{j})\\
 & =(\tilde{f}_{j}^{0},v_{j})
\end{align*}
for all $v\in\tilde{V}_{j}$.
The statement that {\bf Assumption 3}  implies {\bf Assumption 2} can be proved by a similar argument.
\end{proof}

Our final error estimate is based on the following assumption.


\noindent
{\bf Assumption 4}:
We assume that there exists a bilinear form $\tilde{a}_{jk}:\tilde{V}_{k}\times\tilde{V}_{j}\rightarrow\mathbb{R}$ such
that
$$\cfrac{|\tilde{a}_{jk}(u,v)-\tilde{a}_{jk}^{\beta_{0}(\epsilon),\epsilon}(u,v)|}{\|u\|_{\tilde{V}_{k}}\|v\|_{\tilde{V}_{j}}}\rightarrow0$$
where we define
$$\tilde{a}_{jk}^{h,\epsilon}(u,v):=\int_{\Omega}\int_{\Omega}v(y)\tilde{\kappa}^{\epsilon}(y)\tilde{\kappa}_{jk}^{h,\epsilon}(x,y)\tilde{\kappa}^{\epsilon}(x)u(x).$$
Furthermore, we assume there are constants $c_0, c_1$ and $C_1$ such that
$$c_{0}\sum_{k}\|u_{k}\|_{L^{2}}^{2}\leq c_{1}\sum_{k}\|u_{k}\|_{\tilde{V}_{k}}^{2}\leq\sum_{j,k}\tilde{a}_{jk}(u_{k},u_{j}),$$
and $$\sum_{j,k}\tilde{a}_{jk}(u_{k},v_{j})\leq C_{1}\Big(\sum_{k}\|u_{k}\|_{\tilde{V}_{k}}^{2}\Big)^{\frac{1}{2}}\Big(\sum_{k}\|u_{k}\|_{\tilde{V}_{k}}^{2}\Big)^{\frac{1}{2}}.$$
We also assume that
$$\tilde{f}_{j}^{\beta_{0}(\epsilon),\epsilon}=\int_{\Omega}f(y)\sum_{z\in J^{h}}\phi_{k}^{h,\epsilon}(z,y)\tilde{\kappa}^{\epsilon}(x)\psi_{k}^{h,\epsilon}(z,x)\rightarrow\tilde{f}_{j}.$$

We state our main result.

\begin{theorem}
\label{thm}
If {\bf Assumption 4} holds, we have $\Pi_{k}^{\beta_{0}(\epsilon)}\tilde{u}^{\beta_{0}(\epsilon),\epsilon}\rightarrow\tilde{u}_{k}$
where $\tilde{u}_{k}\in\tilde{V}_{k}$ satisfies
\[
\sum_{k}\tilde{a}_{j,k}(\tilde{u}_{k},v_{j})=(\tilde{f}_{j},v_{j}).
\]
\end{theorem}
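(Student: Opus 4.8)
The plan is to pass to the limit $\epsilon\to 0$ in the exact intermediate-scale identity satisfied by $\tilde u^{\beta_0(\epsilon),\epsilon}$ and to identify the limit with the unique solution of the homogenized system. First I would record the $\epsilon$-level equation. Writing $h=\beta_0(\epsilon)$, the Definition of the upscaled quantities shows that, for each $j$,
\[
\sum_k \tilde a_{jk}^{\beta_0(\epsilon),\epsilon}\big(\tilde u^{\beta_0(\epsilon),\epsilon},v\big)=\big(\tilde f_j^{\beta_0(\epsilon),\epsilon},v\big),\qquad\forall v.
\]
The structural fact I would exploit is that $\tilde a_{jk}^{h,\epsilon}(u,v)$ depends on its arguments only through $R_{0,k}^{h}(u)$ and $R_{0,j}^{h}(v)$ (as recorded in the remark following the Definition), and since $R_{0,k}^{h}\circ P_{0,k}^{h}$ is the identity by the orthonormality of $\psi_k^{h,\epsilon}$, these data are equivalent to the projections $\Pi_k^{h}(u)$, $\Pi_j^{h}(v)$. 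Hence the identity above is genuinely an equation for the projected unknowns $\Pi_k^{\beta_0(\epsilon)}\tilde u^{\beta_0(\epsilon),\epsilon}$, which are exactly the quantities the theorem controls.

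Next I would obtain a uniform a priori bound. The normalized convergence $\tilde a_{jk}^{\beta_0(\epsilon),\epsilon}\to\tilde a_{jk}$ in Assumption 4, together with the coercivity $c_1\sum_k\|u_k\|_{\tilde V_k}^2\le\sum_{j,k}\tilde a_{jk}(u_k,u_j)$, makes the discrete forms uniformly coercive for all small $\epsilon$ by a standard perturbation argument. Testing the identity with the projected solution itself and using the source convergence $\tilde f_j^{\beta_0(\epsilon),\epsilon}\to\tilde f_j$ yields a bound on $\sum_k\|\Pi_k^{\beta_0(\epsilon)}\tilde u^{\beta_0(\epsilon),\epsilon}\|_{\tilde V_k}$ uniform in $\epsilon$. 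Since each $\tilde V_k$ embeds compactly in $L^2(\Omega)$, I may extract a subsequence along which $\Pi_k^{\beta_0(\epsilon)}\tilde u^{\beta_0(\epsilon),\epsilon}$ converges strongly in $L^2(\Omega)$ and weakly in $\tilde V_k$; by Assumption 3 the limit lies in $\tilde V_k$ and I denote it $\tilde u_k$.

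Then I would pass to the limit term by term. Fix $v_j\in\tilde V_j$ and choose admissible discrete test functions converging to it (available from the density $\cup_j V_{aux}^{h_j}=L^2(\Omega)$). Splitting
\[
\tilde a_{jk}^{\beta_0(\epsilon),\epsilon}\big(\Pi_k^{\beta_0(\epsilon)}\tilde u^{\beta_0(\epsilon),\epsilon},v_j\big)=\big(\tilde a_{jk}^{\beta_0(\epsilon),\epsilon}-\tilde a_{jk}\big)\big(\Pi_k^{\beta_0(\epsilon)}\tilde u^{\beta_0(\epsilon),\epsilon},v_j\big)+\tilde a_{jk}\big(\Pi_k^{\beta_0(\epsilon)}\tilde u^{\beta_0(\epsilon),\epsilon},v_j\big),
\]
the first term tends to zero by the normalized convergence of Assumption 4 against the uniform $\tilde V_k$-bound, while the second converges to $\tilde a_{jk}(\tilde u_k,v_j)$ by the weak convergence of the arguments and the continuity of $\tilde a_{jk}$. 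The right-hand side converges to $(\tilde f_j,v_j)$. Summing over $k$ gives $\sum_k\tilde a_{jk}(\tilde u_k,v_j)=(\tilde f_j,v_j)$ for all $v_j\in\tilde V_j$. Finally, the coercivity and boundedness in Assumption 4 make this limit system uniquely solvable by Lax--Milgram on $\oplus_k\tilde V_k$, so $\tilde u_k$ does not depend on the chosen subsequence and the whole family converges.

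I expect the main obstacle to be the limit of the bilinear terms: because $\tilde a_{jk}^{\beta_0(\epsilon),\epsilon}$ itself varies with $\epsilon$, weak convergence of the arguments alone does not suffice, and one must genuinely combine the operator-norm (normalized) convergence of Assumption 4 with the uniform energy bound to kill the cross term. A closely related technical difficulty is the mismatch between the $\epsilon$-dependent discrete test space indexed by $J^{\beta_0(\epsilon)}$ and the fixed limiting space $\tilde V_j$; constructing recovery sequences $v_j^\epsilon\to v_j$ that are legitimate test functions at each scale, and checking that the splitting above behaves uniformly along them, is the delicate part of the argument.
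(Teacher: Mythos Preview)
Your proposal is correct in spirit but takes a genuinely different route from the paper. The paper does not extract subsequences or invoke compactness at all; instead, after recording the same $\epsilon$-level identity and the same uniform $\tilde V_k$-bound that you derive, it defines $\tilde u_k$ directly as the Lax--Milgram solution of the limit system and then proves a \emph{quantitative} Strang-type estimate for the error $\eta_k=\tilde u_k-\Pi_k^{\beta_0(\epsilon)}\tilde u^{\beta_0(\epsilon),\epsilon}$. Testing the coercivity inequality of Assumption~4 with $\eta$ itself and subtracting the two equations yields
\[
c_1\sum_k\|\eta_k\|_{\tilde V_k}^2\le\Big|\sum_j(\tilde f_j-\tilde f_j^{\beta_0(\epsilon),\epsilon},\eta_j)\Big|+\Big|\sum_{j,k}\big(\tilde a_{jk}-\tilde a_{jk}^{\beta_0(\epsilon),\epsilon}\big)\big(\Pi_k^{\beta_0(\epsilon)}\tilde u^{\beta_0(\epsilon),\epsilon},\eta_j\big)\Big|,
\]
and the right-hand side, divided by $(\sum_k\|\eta_k\|_{\tilde V_k}^2)^{1/2}$, tends to zero by Assumption~4 together with the uniform bound. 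This gives \emph{strong} convergence in $\bigoplus_k\tilde V_k$ for the full family, with no subsequence argument.

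Your compactness-plus-uniqueness argument also reaches the goal but is less economical: you need reflexivity of $\tilde V_k$ (implicit but not stated), you only obtain weak $\tilde V_k$-convergence unless you add an extra step, and your invocation of Assumption~3 is both unnecessary (weak limits in $\tilde V_k$ stay in $\tilde V_k$ automatically) and outside the hypotheses of the theorem, which assumes only Assumption~4. The ``recovery sequence'' difficulty you anticipate does not arise either: the $\epsilon$-level identity already holds for every fixed $v_j\in\tilde V_j$, since $\tilde a_{jk}^{h,\epsilon}$ is defined as a double $L^2$-integral and requires no discrete test space. The paper's direct estimate sidesteps all of this and, as a bonus, displays the rate of convergence in terms of the operator and source defects.
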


\begin{proof}
First, we have
\[
\sum_{k}\tilde{a}_{jk}^{\beta_{0}(\epsilon),\epsilon}(\Pi_{k}^{\beta_{0}(\epsilon)}\tilde{u}^{\beta_{0}(\epsilon),\epsilon},v_{j})=(\tilde{f}_{j}^{\beta_{0}(\epsilon),\epsilon},v_{j})
\]
and, using {\bf Assumption 4},
\begin{align*}
c_{1}\sum_{k}\|\Pi_{k}^{\beta_{0}(\epsilon)}\tilde{u}^{\beta_{0}(\epsilon),\epsilon}\|_{\tilde{V}_{k}}^{2} & \leq\sum_{j,k}\tilde{a}_{jk}(\Pi_{k}^{\beta_{0}(\epsilon)}\tilde{u}^{\beta_{0}(\epsilon),\epsilon},\Pi_{j}^{\beta_{0}(\epsilon)}\tilde{u}^{\beta_{0}(\epsilon),\epsilon})\\
 & \leq\sum_{j} \Big\{ |\sum_{k}\tilde{a}_{jk}(\Pi_{k}^{\beta_{0}(\epsilon)}\tilde{u}^{\beta_{0}(\epsilon),\epsilon},\Pi_{j}^{\beta_{0}(\epsilon)}\tilde{u}^{\beta_{0}(\epsilon),\epsilon})-\sum_{k}\tilde{a}_{jk}^{\beta_{0}(\epsilon),\epsilon}(\Pi_{k}^{\beta_{0}(\epsilon)}\tilde{u}^{\beta_{0}(\epsilon),\epsilon},\Pi_{j}^{\beta_{0}(\epsilon)}\tilde{u}^{\beta_{0}(\epsilon),\epsilon})| \\
 &\quad\quad+|(\tilde{f}_{j}^{\beta_{0}(\epsilon),\epsilon},\Pi_{j}^{\beta_{0}(\epsilon)}\tilde{u}^{\beta_{0}(\epsilon),\epsilon})| \Big\}.
\end{align*}
When $\epsilon$ is small enough, we have
\[
\sum_{j}|\sum_{k}\tilde{a}_{jk}(\Pi_{k}^{\beta_{0}(\epsilon)}\tilde{u}^{\beta_{0}(\epsilon),\epsilon},\Pi_{j}^{\beta_{0}(\epsilon)}\tilde{u}^{\beta_{0}(\epsilon),\epsilon})-\sum_{k}\tilde{a}_{jk}^{\beta_{0}(\epsilon),\epsilon}(\Pi_{k}^{\beta_{0}(\epsilon)}\tilde{u}^{\beta_{0}(\epsilon),\epsilon},\Pi_{j}^{\beta_{0}(\epsilon)}\tilde{u}^{\beta_{0}(\epsilon),\epsilon})|\leq\cfrac{c_{1}}{2}\|u\|_{\tilde{V}_{k}}\|v\|_{\tilde{V}_{j}}
\]
and
\[
\sum_{k}\|\Pi_{k}^{\beta_{0}(\epsilon)}\tilde{u}^{\beta_{0}(\epsilon),\epsilon}\|_{\tilde{V}_{k}}^{2}\leq C\sum_{j}\|\tilde{f}_{j}^{\beta_{0}(\epsilon)}\|_{L^{2}(\Omega)}^{2}\leq C\sum_{j}\|\tilde{f}_{j}\|_{L^{2}(\Omega)}^{2}.
\]
Let $\eta_{j}=\tilde{u}_{j}-\Pi_{j}^{\beta_{0}(\epsilon)}\tilde{u}^{\beta_{0}(\epsilon),\epsilon}$. Then we have
\begin{align*}
c_{1}\sum_{k}\|\eta_{k}\|_{\tilde{V}_{k}}^{2} & \leq\sum_{k,j}\tilde{a}_{jk}(\tilde{u}_{k}-\Pi_{k}^{\beta_{0}(\epsilon)}\tilde{u}^{\beta_{0}(\epsilon),\epsilon},\eta_{j})\\
 & \leq|\sum_{k,j}\tilde{a}_{jk}(\tilde{u}_{k},\eta_{j})-\tilde{a}_{jk}^{\beta_{0}(\epsilon),\epsilon}(\Pi_{k}^{\beta_{0}(\epsilon)}\tilde{u}^{\beta_{0}(\epsilon),\epsilon},\eta_{j})|+|\sum_{k,j}\tilde{a}_{jk}(\Pi_{k}^{\beta_{0}(\epsilon)}\tilde{u}^{\beta_{0}(\epsilon),\epsilon},\eta_{j})-\tilde{a}_{jk}^{\beta_{0}(\epsilon),\epsilon}(\Pi_{k}^{\beta_{0}(\epsilon)}\tilde{u}^{\beta_{0}(\epsilon),\epsilon},\eta_{j})|\\
 & =|\sum_{j}(\tilde{f}_{j}-\tilde{f}_{j}^{\beta_{0}(\epsilon),\epsilon},\eta_{j})|+|\sum_{k,j}\tilde{a}_{jk}(\Pi_{k}^{\beta_{0}(\epsilon)}\tilde{u}^{\beta_{0}(\epsilon),\epsilon},\eta_{j})-\tilde{a}_{jk}^{\beta_{0}(\epsilon),\epsilon}(\Pi_{k}^{\beta_{0}(\epsilon)}\tilde{u}^{\beta_{0}(\epsilon),\epsilon},\eta_{j})|
\end{align*}
and
\[
\Big(\sum_{k}\|\eta_{k}\|_{\tilde{V}_{k}}^{2}\Big)^{\frac{1}{2}}\leq\cfrac{|\sum_{j}(\tilde{f}_{j}-\tilde{f}_{j}^{\beta_{0}(\epsilon),\epsilon},\eta_{j})|+|\sum_{k,j}\tilde{a}_{jk}(\Pi_{k}^{\beta_{0}(\epsilon)}\tilde{u}^{\beta_{0}(\epsilon),\epsilon},\eta_{j})-\tilde{a}_{jk}^{\beta_{0}(\epsilon),\epsilon}(\Pi_{k}^{\beta_{0}(\epsilon)}\tilde{u}^{\beta_{0}(\epsilon),\epsilon},\eta_{j})|}{\Big(\sum_{k}\|\eta_{k}\|_{\tilde{V}_{k}}^{2}\Big)^{\frac{1}{2}}\Big(\sum_{j}\|\tilde{f}_{j}\|_{L^{2}(\Omega)}^{2}\Big)^{\frac{1}{2}}}\rightarrow0.
\]
This completes the proof.
\end{proof}

\subsection{Two examples}

In this section, we present two examples.
By the decaying property of the function $\phi_{j}^{h,\epsilon}$, we have $\kappa_{j,k}(x,z)=0\;\text{if}$
$x\neq z$.

\noindent
{\bf Case 1}: We take $\kappa(\epsilon,x)=\kappa(\cfrac{x}{\epsilon})$, $\psi_{1}^{h,\epsilon}(x,y)=\cfrac{1}{|K(x)|^{\frac{1}{2}}}I_{K(x)}(y)$ and
$\tilde{\kappa}^{\epsilon}=1$. Then we have

\[
\tilde{u}_{1}^{\epsilon^{\frac{1}{2}},\epsilon}\rightarrow u_{0}^{*}
\]
 where $\int_{\Omega}\kappa^{*}\nabla u_{0}^{*}\cdot\nabla v dx =\int_{\Omega}f v$ and
we deduce from Theorem~\ref{thm} that
\[
\tilde{a}_{jk}^{\epsilon^{\frac{1}{2}},\epsilon}(u,v)\rightarrow\int_{\Omega}\kappa^{*}\nabla u\cdot\nabla vdx, \quad \text{as } \epsilon \rightarrow 0.
\]

\noindent
{\bf Case 2}: We take $\kappa(\epsilon,x)=\kappa_{0}(x)+\kappa_{1}\epsilon^{-1}I_{\Gamma(\epsilon)}$ where
$\Gamma(\epsilon)=\{x\in\Omega \; |\; d(x,\Gamma)<\epsilon\}$ and $\Gamma$ is a fracture in the domain $\Omega$, and
$$\psi_{1}^{h,\epsilon}(x,y)=\cfrac{1}{|K(x)\backslash F(\epsilon)|^{\frac{1}{2}}}I_{K(x)\backslash F(\epsilon)}(y), \quad
\psi_{2}^{h,\epsilon}(x,y)=\cfrac{1}{|K(x)\cap F(\epsilon)|^{\frac{1}{2}}}I_{K(x)\cap F(\epsilon)}(y).$$
If $u^{\epsilon}\rightarrow u^{0}$ in $H^{1}(\Omega)$ and $u^{\epsilon}|_{\Gamma}\rightarrow u_{\Gamma}^{0}$
in $H^{1}(\Gamma)$, we again deduce from Theorem~\ref{thm} that 
\begin{align*}
 & \tilde{a}_{11}^{\epsilon,\epsilon}(u,v)
\rightarrow  \int_{\Omega}\kappa_{0}\nabla u\cdot\nabla vdx+h^{-1}\int_{\Gamma}Quv+O(h), \\
& \tilde{a}_{12}^{\epsilon,\epsilon}(u,v)
\rightarrow  -h^{-1}\int_{\Gamma}Quv+O(h), \\
& \tilde{a}_{21}^{\epsilon,\epsilon}(u,v)\rightarrow-h^{-1}\int_{\Gamma}Quv+O(h), \\
& \tilde{a}_{22}^{\epsilon,\epsilon}(u,v)\rightarrow\int_{\Gamma}\kappa_{1}\nabla_{\Gamma}u\cdot\nabla_{\Gamma}v+h^{-1}\int_{\Gamma}Quv+O(h).
\end{align*}
Hence we obtain the following upscale system
\begin{align*}
\int_{\Omega}\kappa_{0}^{*}\nabla u_{1}\cdot\nabla v+h^{-1}\int_{\Gamma}Q(u_{1}-u_{2})v+O(h) & =\int_{\Omega}\tilde{f}_{1}v, \quad \;\forall v\in H^{1}(\Omega),\\
\int_{\Gamma}\kappa_{0}^{*}\nabla_{\Gamma}u_{2}\cdot\nabla_{\Gamma}v+h^{-1}\int_{\Gamma}Q(u_{2}-u_{1})v+O(h) & =\int_{\Gamma}\tilde{f}_{2}v, \quad\;\forall v\in H^{1}(\Gamma).
\end{align*}
We remark that the operator $Q$ is defined in the limit above.

\subsection{RVE approximation}
In this section, we will discuss using a RVE concept to approximate the bilinear operator $\tilde{a}_{jk}^{h,\epsilon}$.
Since $\tilde{a}_{jk}^{h,\epsilon}(u,v):=\int_{\Omega}\int_{\Omega}v(y)\tilde{\kappa}^{\epsilon}(y)\tilde{\kappa}_{jk}^{h,\epsilon}(x,y)\tilde{\kappa}^{\epsilon}(x)u(x)$,
we can approximate $\tilde{a}_{jk}^{h,\epsilon}$ by using  a suitable quadrature rule for the integral. For each coarse grid element $K\in\mathcal{T}_H$, we consider there is a set of RVEs, $K_{REV,k}$. We will approximate $\tilde{a}_{jk}^{h,\epsilon}$ by $\tilde{a}_{jk}^{RVE}$ such that
\begin{equation}
\tilde{a}_{jk}^{RVE} = \sum_k \omega_{RVE,k}\int_{K_{REV,k}}\int_{K_{REV,k}}v(y)\tilde{\kappa}^{\epsilon}(y)\tilde{\kappa}_{jk}^{h,\epsilon}(x,y)\tilde{\kappa}^{\epsilon}(x)u(x).
\end{equation}

\section{Numerical Results}
\label{sec:num}

In this section, we will consider a numerical example to demonstrate
the performance of the method. The computational domain $\Omega$
is defined as $\Omega=[0,1]^{2}$. The medium parameter $\kappa_{\epsilon}$
is defined as
\[
\kappa_{\epsilon}=\begin{cases}
\cfrac{3}{10\epsilon} & \text{if }x\in\Gamma_{1}^{\epsilon}\backslash\Gamma_{2}^{\epsilon}\\
\cfrac{1}{\epsilon} & \text{if }x\in\Gamma_{2}^{\epsilon}\cup\Gamma_{3}^{\epsilon}\\
\cfrac{7}{10\epsilon} & \text{if }x\in\Gamma_{4}^{\epsilon}\\
1 & \text{if }x\in\Omega\backslash(\Gamma_{1}^{\epsilon}\cup\Gamma_{2}^{\epsilon}\cup\Gamma_{3}^{\epsilon}\cup\Gamma_{4}^{\epsilon})
\end{cases}
\]
where
\begin{align*}
& \Gamma_{1}^{\epsilon}=\{x|\;\Big|x_{1}-\cfrac{1}{2}\Big|<\epsilon,\;\cfrac{1}{4}<x_{2}<\cfrac{7}{8}\}, \\
& \Gamma_{2}^{\epsilon}=\{x|\;\Big|\cfrac{x_{1}+x_{2}-1}{\sqrt{2}}\Big|<\epsilon,\;-\cfrac{1}{4}<x_{1}-x_{2}<\cfrac{3}{4}\}, \\
& \Gamma_{3}^{\epsilon}=\{x|\;\Big|\cfrac{2x_{1}-3x_{2}+0.2}{\sqrt{13}}\Big|<\epsilon,\;\cfrac{7}{8}<\cfrac{3x_{1}+2x_{2}}{\sqrt{13}}<\cfrac{9}{8}\}, \\
& \Gamma_{4}^{\epsilon}=\{x|\;\sqrt{2}\Big|(x_{1}-\cfrac{1}{2})^{2}+(x_{2}-\cfrac{1}{2})^{2}-\cfrac{1}{8}\Big|<\epsilon,\;x_{1}<\cfrac{1}{2},\;x_{2}<\cfrac{3}{4}\}.
\end{align*}
The choice of the RVE location is illustrated in Figure~\ref{fig:RVE}.
The source term $f$ is defined as
\[
f(x) = e^{-40((x_1-\frac{9}{10})^2+(x_2-\frac{1}{10})^2)}.
\]
In Figure~\ref{fig:result1},
we present the computational results.
In the first figure (left plot in Figure~\ref{fig:result1}), we present the reference solution. In the second figure (middle plot in Figure~\ref{fig:result1}),
we present the matrix part of the upscaled solution. In the third figure (right plot in Figure~\ref{fig:result1}),
we present the channel part of the upscaled solution. From these results, we observe that our proposed upscaling method
is able to produce accurate upscaled solutions.

\begin{figure}[ht]
\centering

\includegraphics[width=12cm,height=8cm]{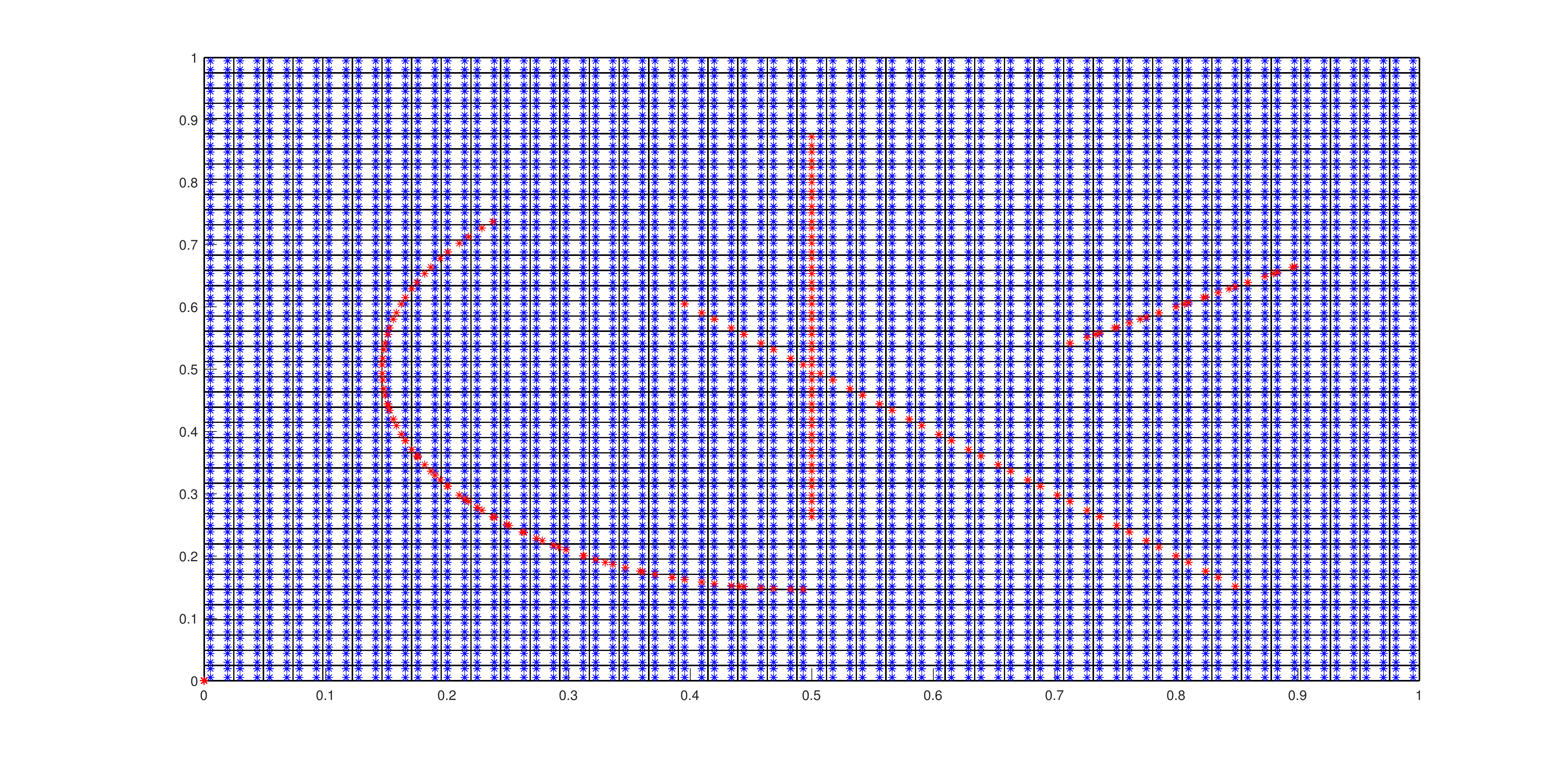}

\caption{The computational domain $\Omega$, the medium parameter $\kappa_{\epsilon}$ and the RVE points.}
\label{fig:RVE}
\end{figure}

\begin{figure}[ht]
\centering

\includegraphics[scale=0.3]{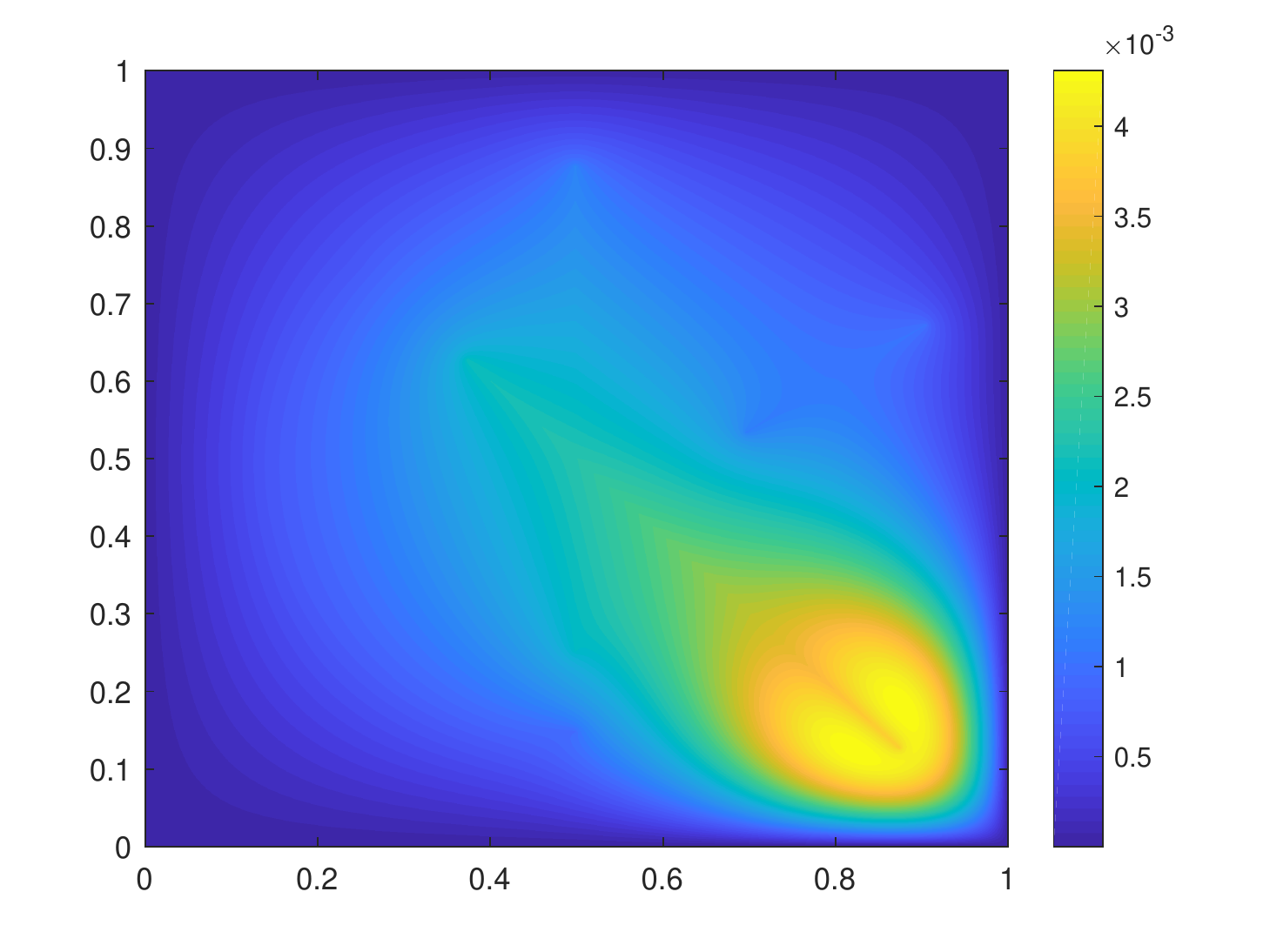} \includegraphics[scale=0.3]{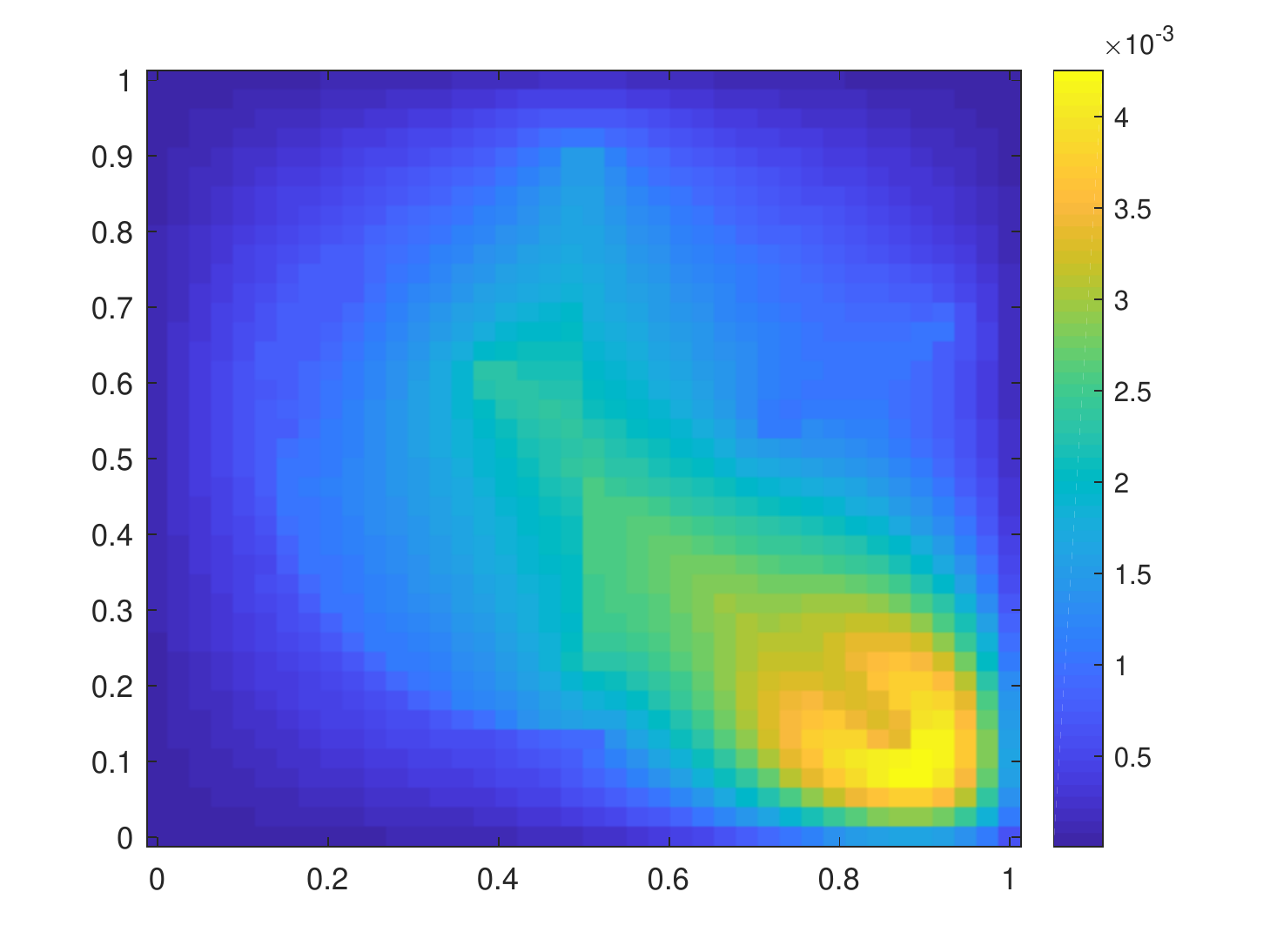}
\includegraphics[scale=0.3]{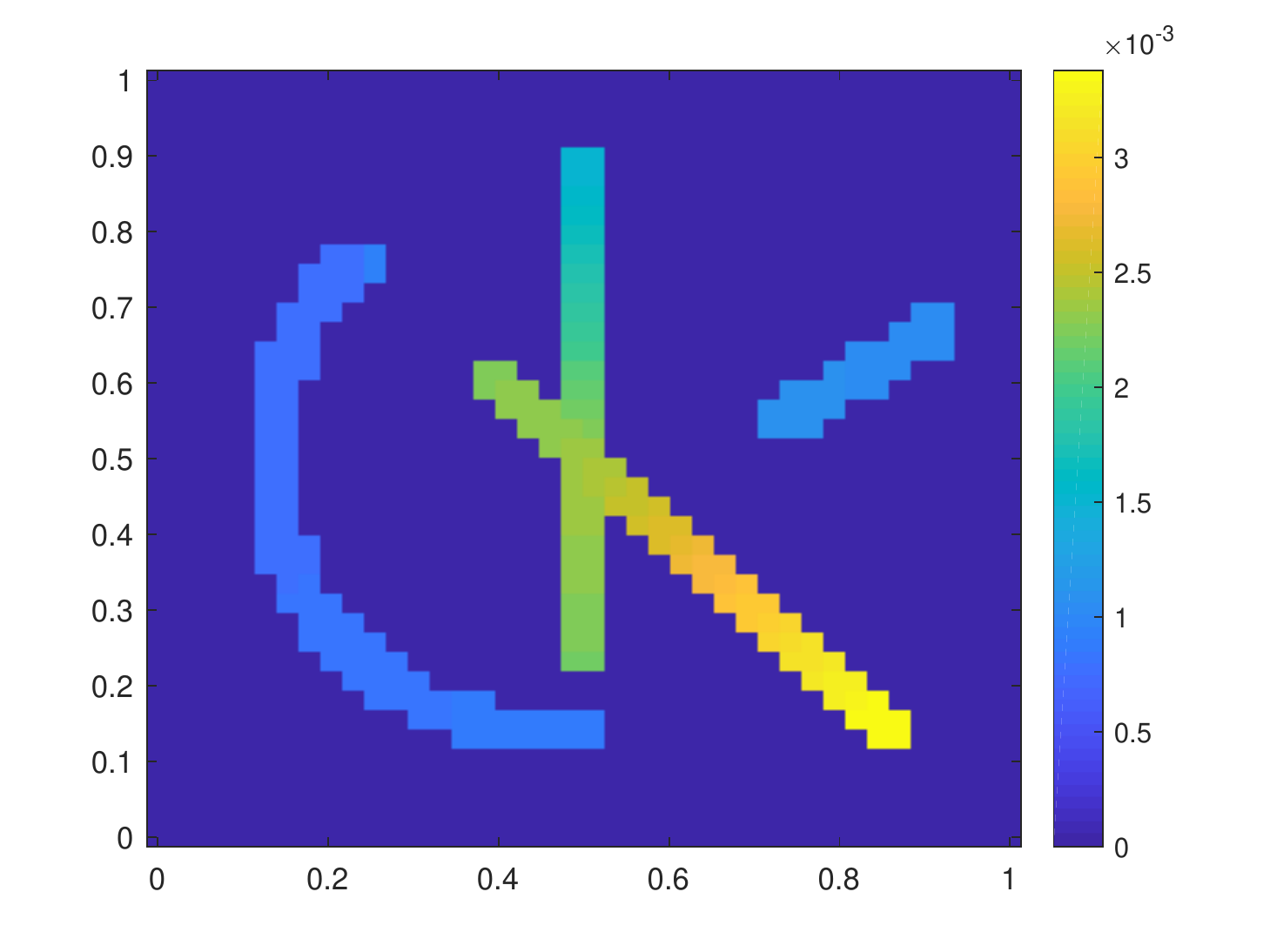}

\caption{Computational results for the first example. Left: reference solution, Middle: upscaled solution (matrix),
Right: upscaled solution (channel).}
\label{fig:result1}
\end{figure}

In the next example, we will consider a time dependent case. We assume $u_1$ and $u_2$ satisfy the following equations
\begin{align}
\Big(\Big(\Pi^{h}_{i}(u_i)(\cdot,t)\Big)_t,\Pi^{h}_{i}(v_i)\Big) + \sum _j \tilde{a}_{ij}(u_j,v_i)= \int_{\Omega} f v_i,\;\forall v_i\in V_i,\;\forall t\in (0,T].
\end{align}
The computational domain $\Omega$
is defined as $\Omega=[0,1]^{2}$. The medium parameter $\kappa_{\epsilon}$
is defined as
\[
\kappa_{\epsilon}=\begin{cases}
10 & \text{if } \Big|\sin \Big(\cfrac{\pi ((1-x_2)x_2+x_1)}{\epsilon}\Big)\sin \Big(\cfrac{\pi(x_2+x^2_1)}{\epsilon}\Big)\Big|<0.2,\\
\cfrac{\epsilon}{10000} & \text{otherwise.}
\end{cases}
\]
The source term $f$ is chosen to be the same as the previous example.
In Figure~\ref{fig:RVE_time}, we illustrate the medium parameter $\kappa_{\epsilon}$ and the RVE points.
In the top figure, we present the computational domain, coarse grid and the RVE points.
In the bottom left figure, we present the medium parameter $\kappa_{\epsilon}$ around the point $(0.0717,0.717)$.
In the bottom middle and bottom right figures, we present the medium parameter $\kappa_{\epsilon}$ around the points
$(0.5262,0.5262)$ and $(0.9808,0.717)$ respectively.

\begin{figure}[ht]
\centering
\includegraphics[width=12cm,height=8cm]{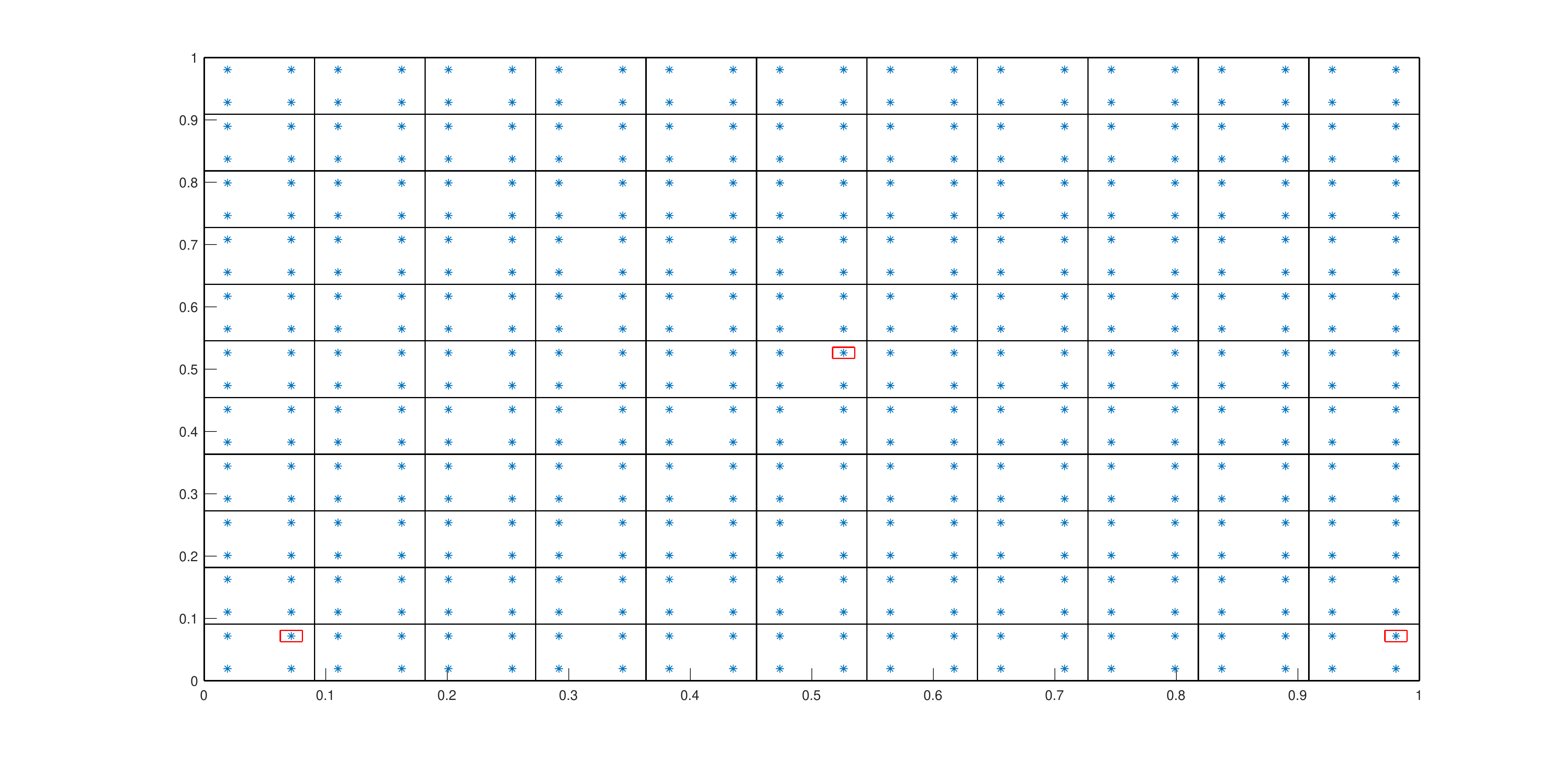}

\includegraphics[scale=0.3]{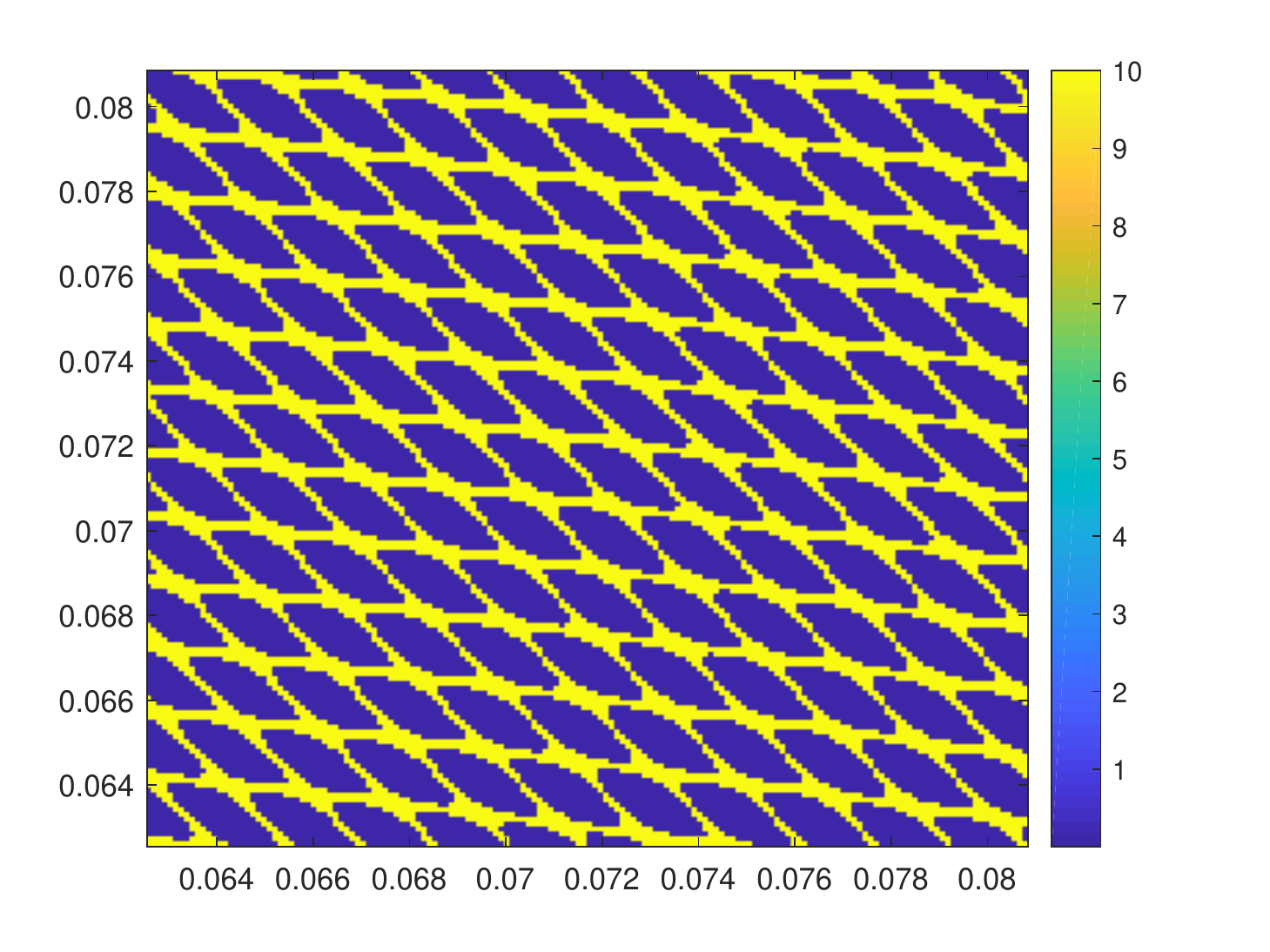} \includegraphics[scale=0.3]{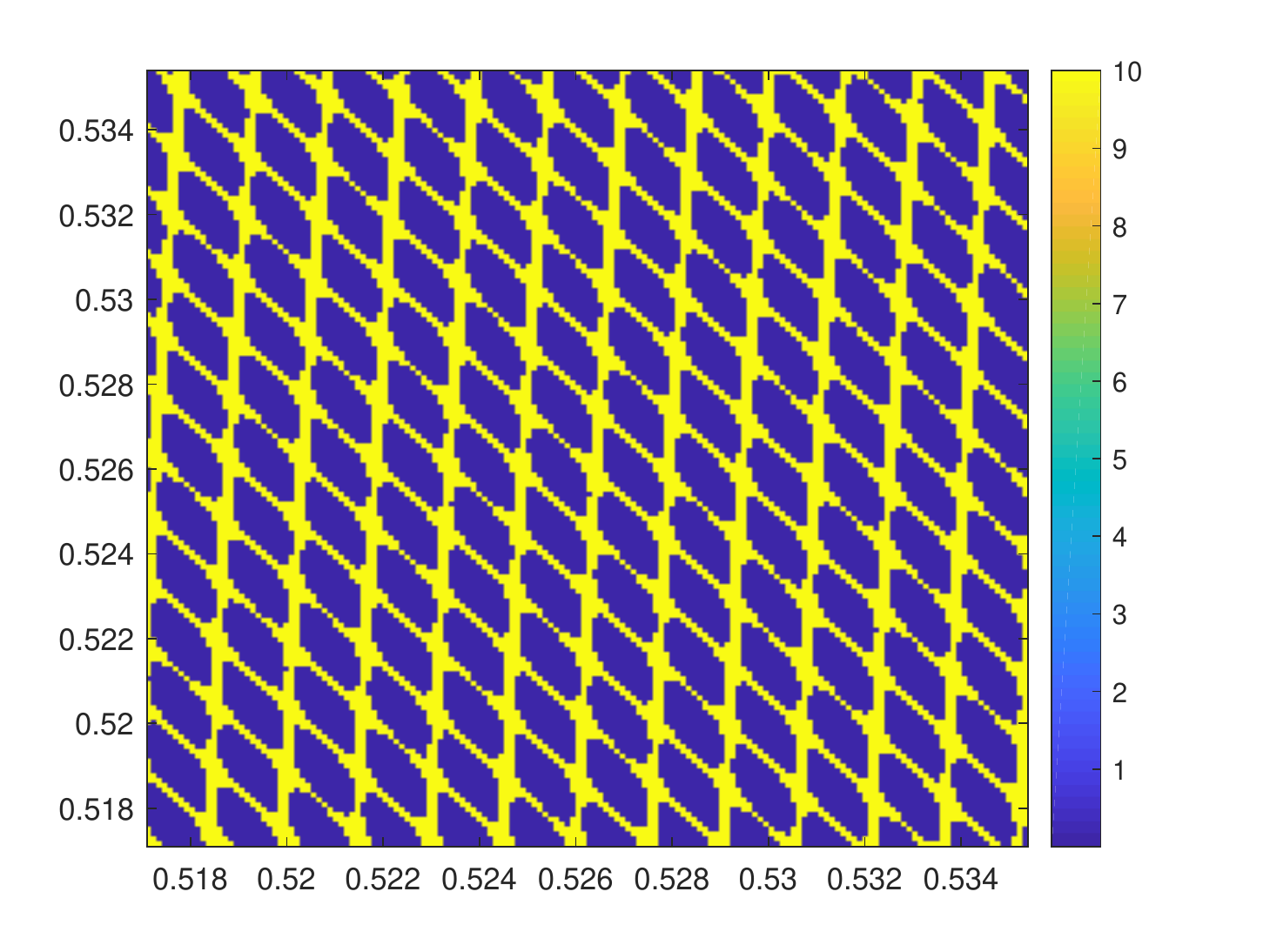} \includegraphics[scale=0.3]{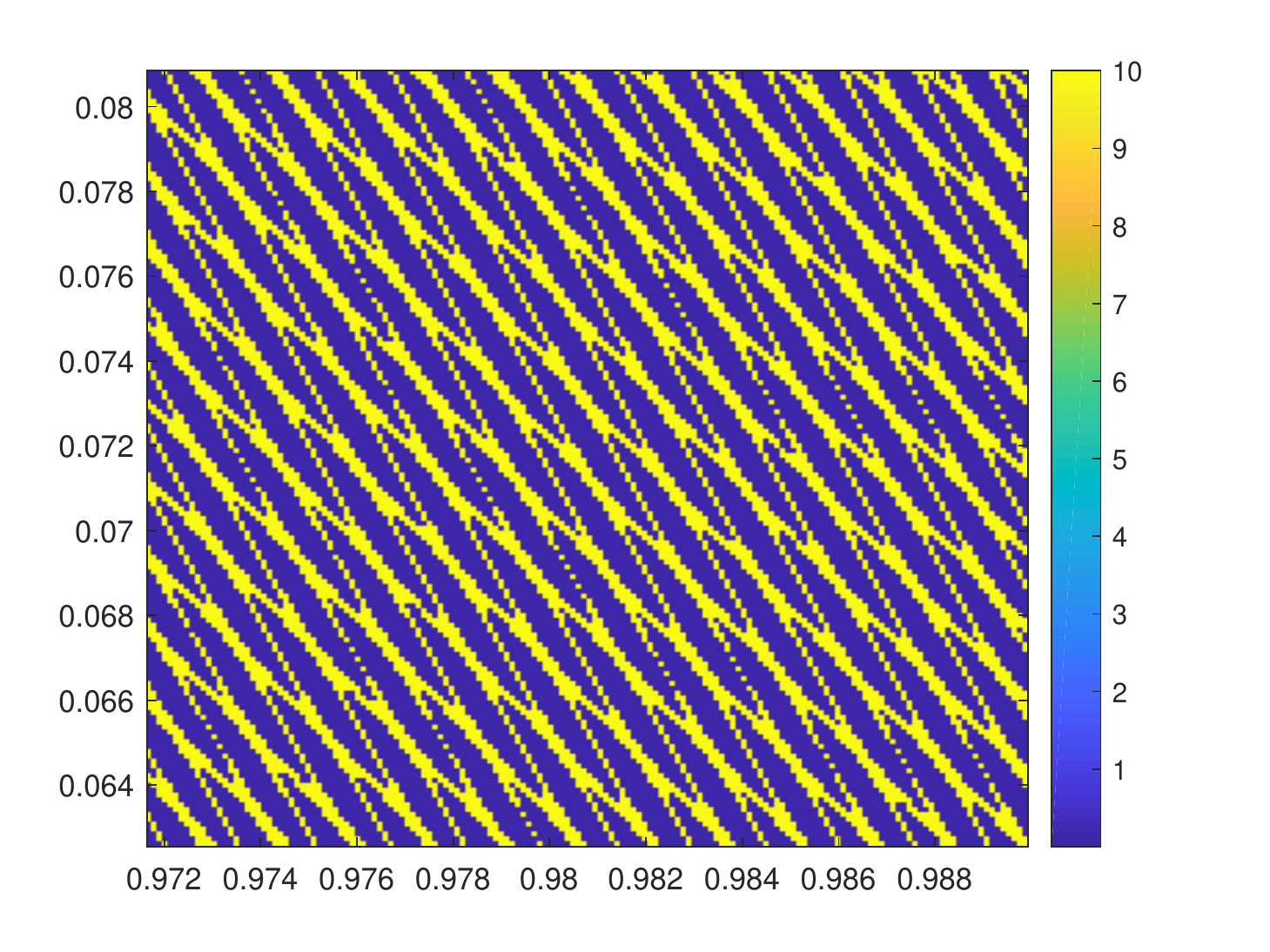}
\caption{The RVE points and the medium parameter $\kappa_{\epsilon}$ for the second example. Top: RVE points in the domain. Bottom Left: $\kappa_{\epsilon}$  around $(0.0717,0.717)$. Bottom Middle: $\kappa_{\epsilon}$  around $(0.5262,0.5262)$. Bottom Left: $\kappa_{\epsilon}$  around $(0.9808,0.717)$.}
\label{fig:RVE_time}
\end{figure}

In Figure~\ref{fig:results}, we present the computational results for the second example.
In the three figures on the top, we present the matrix part of the upscaled solutions at the times $T=0.005, T=0.01$ and $T=0.02$
respectively.
In the three figures at the bottom, we present the channel part of the upscaled solutions at the times $T=0.005, T=0.01$ and $T=0.02$
respectively.

\begin{figure}[ht]
\centering

\includegraphics[scale=0.3]{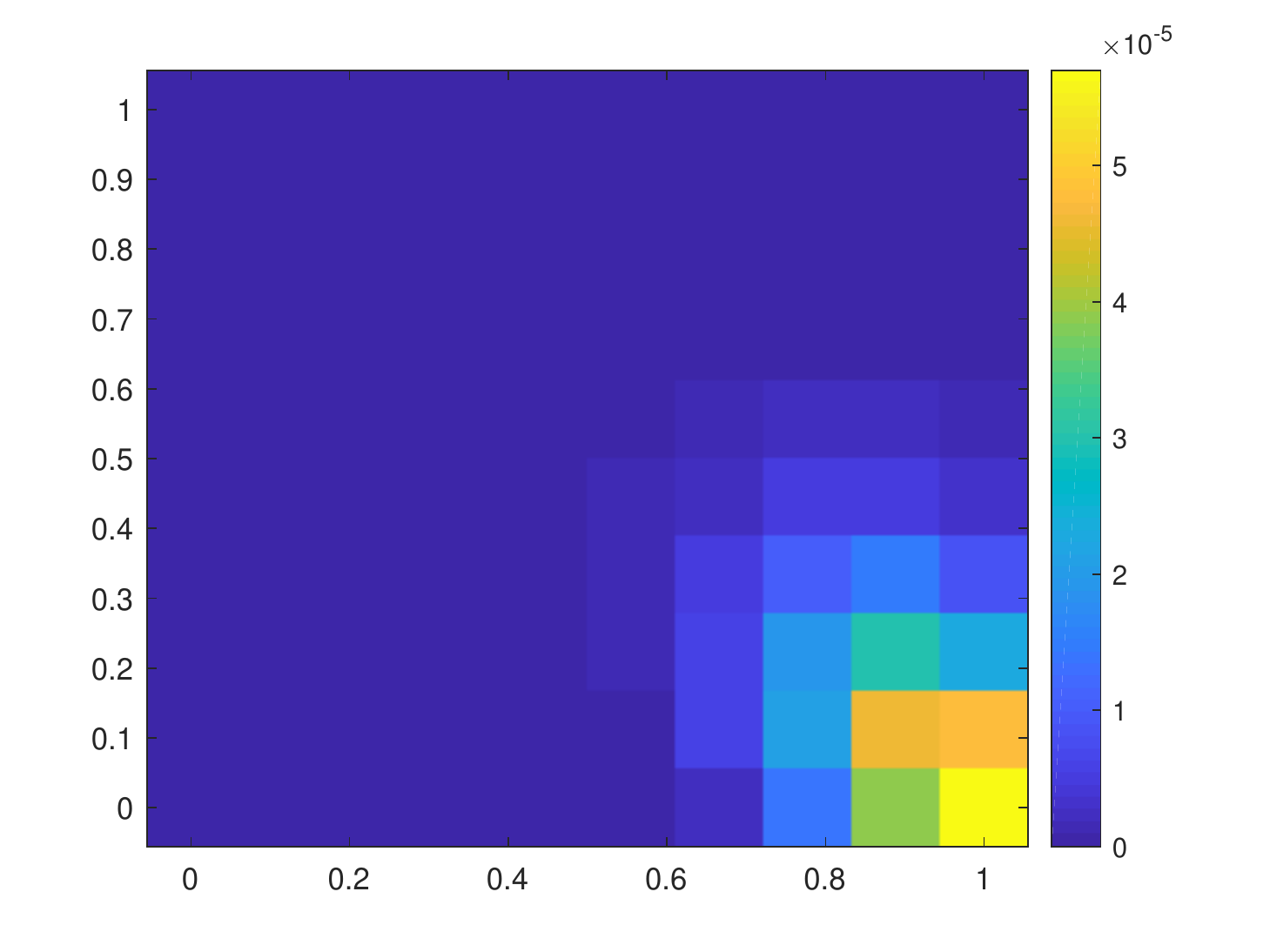} \includegraphics[scale=0.3]{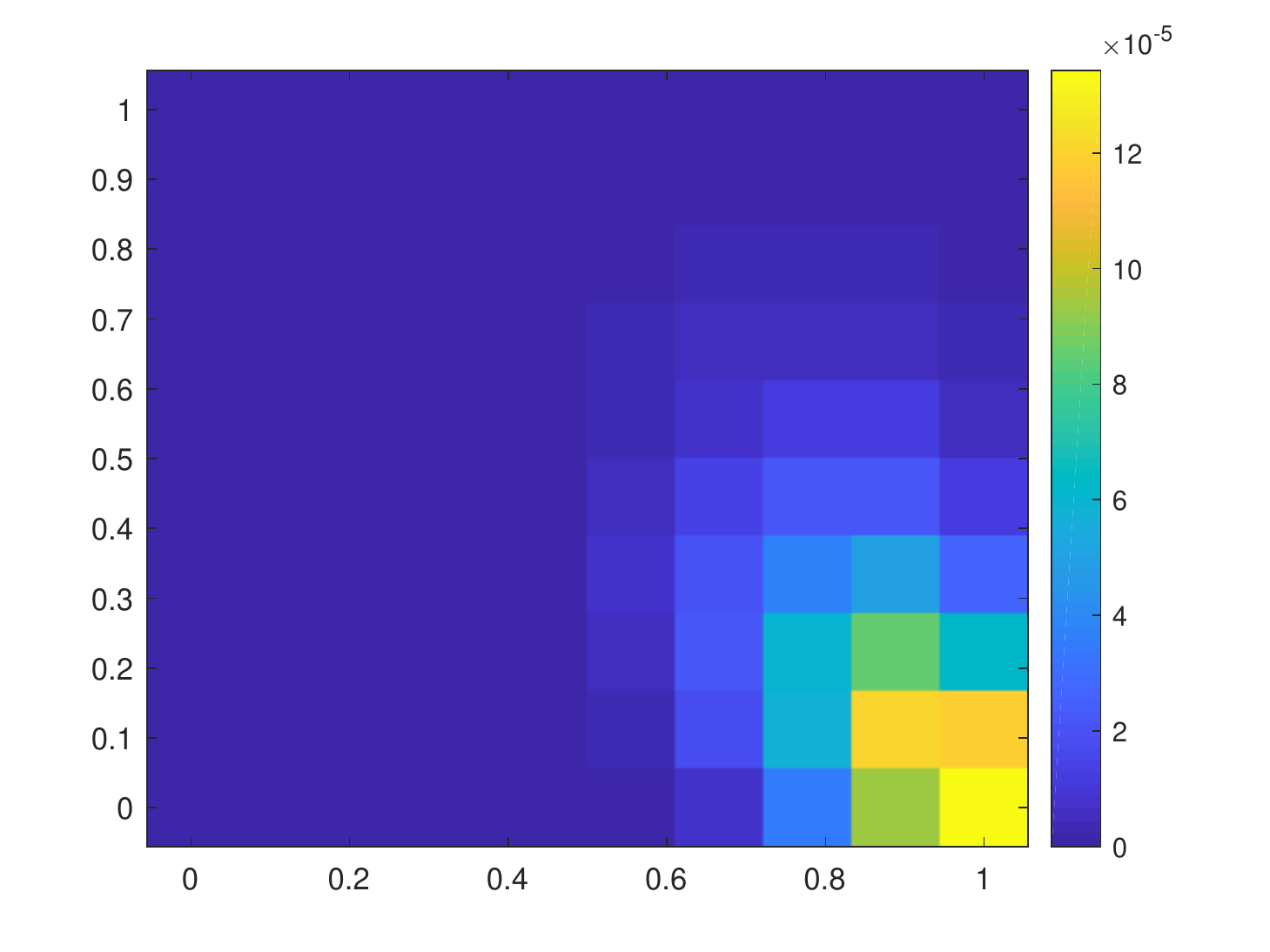}
\includegraphics[scale=0.3]{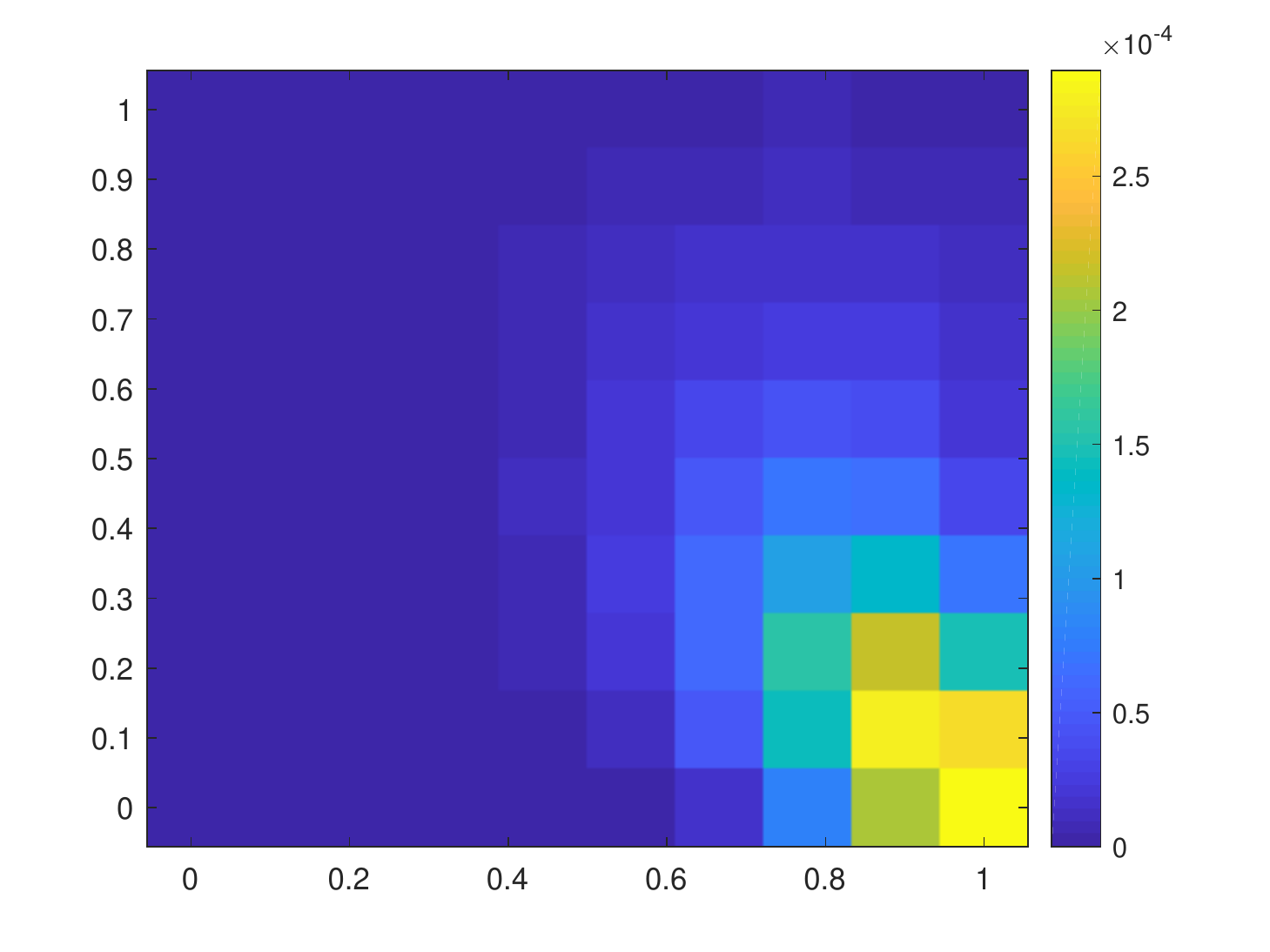}

\includegraphics[scale=0.3]{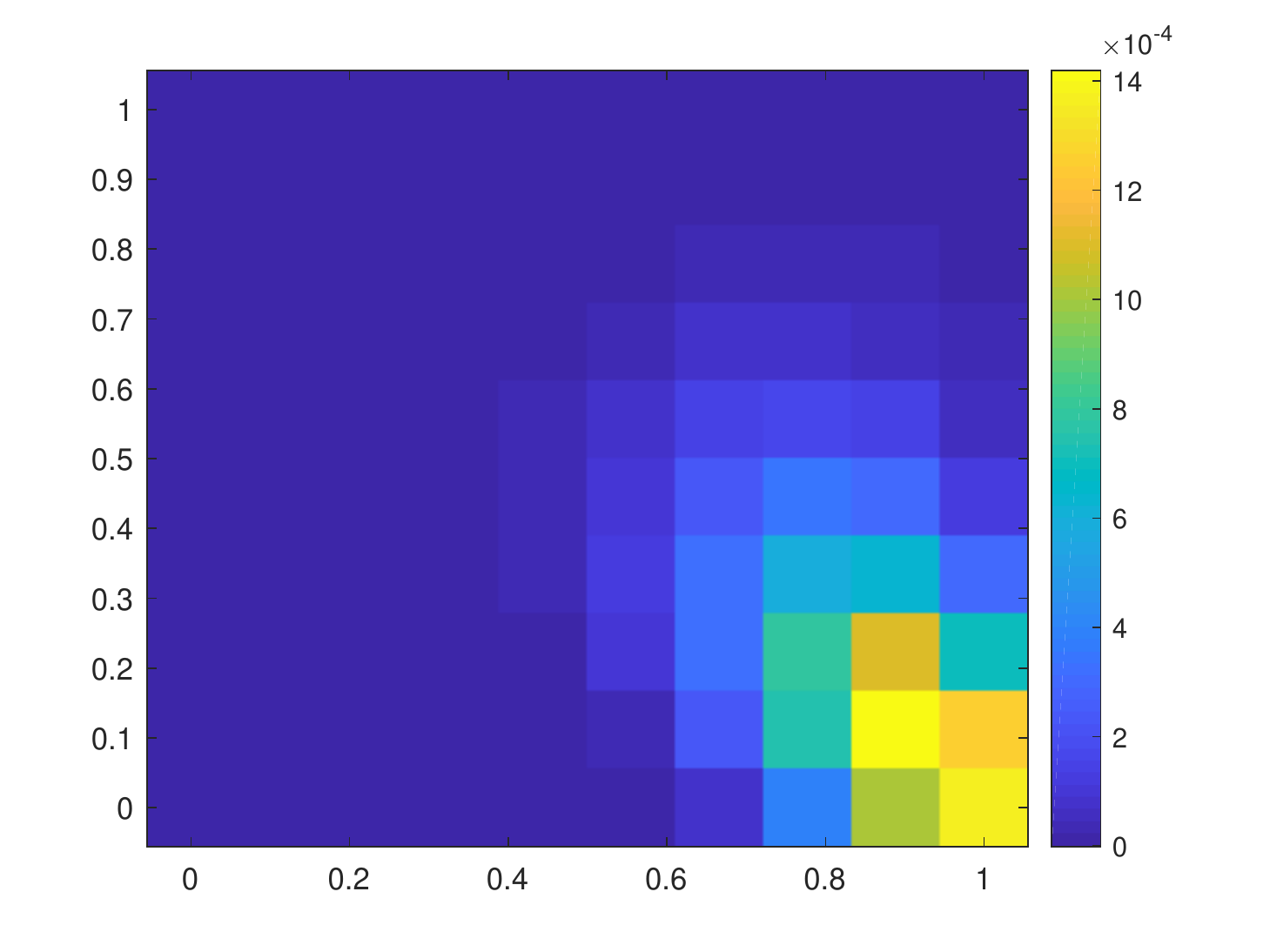} \includegraphics[scale=0.3]{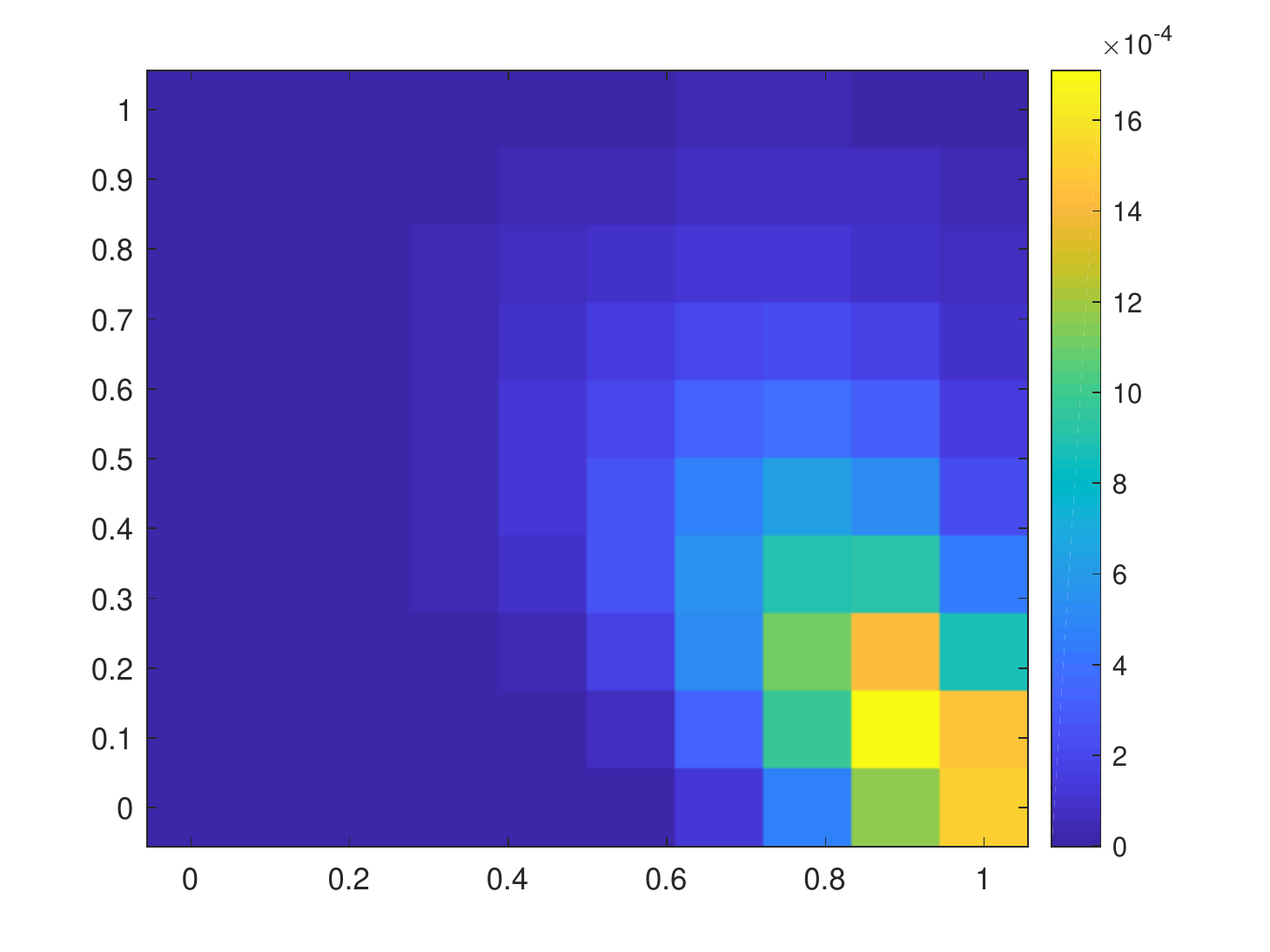}
\includegraphics[scale=0.3]{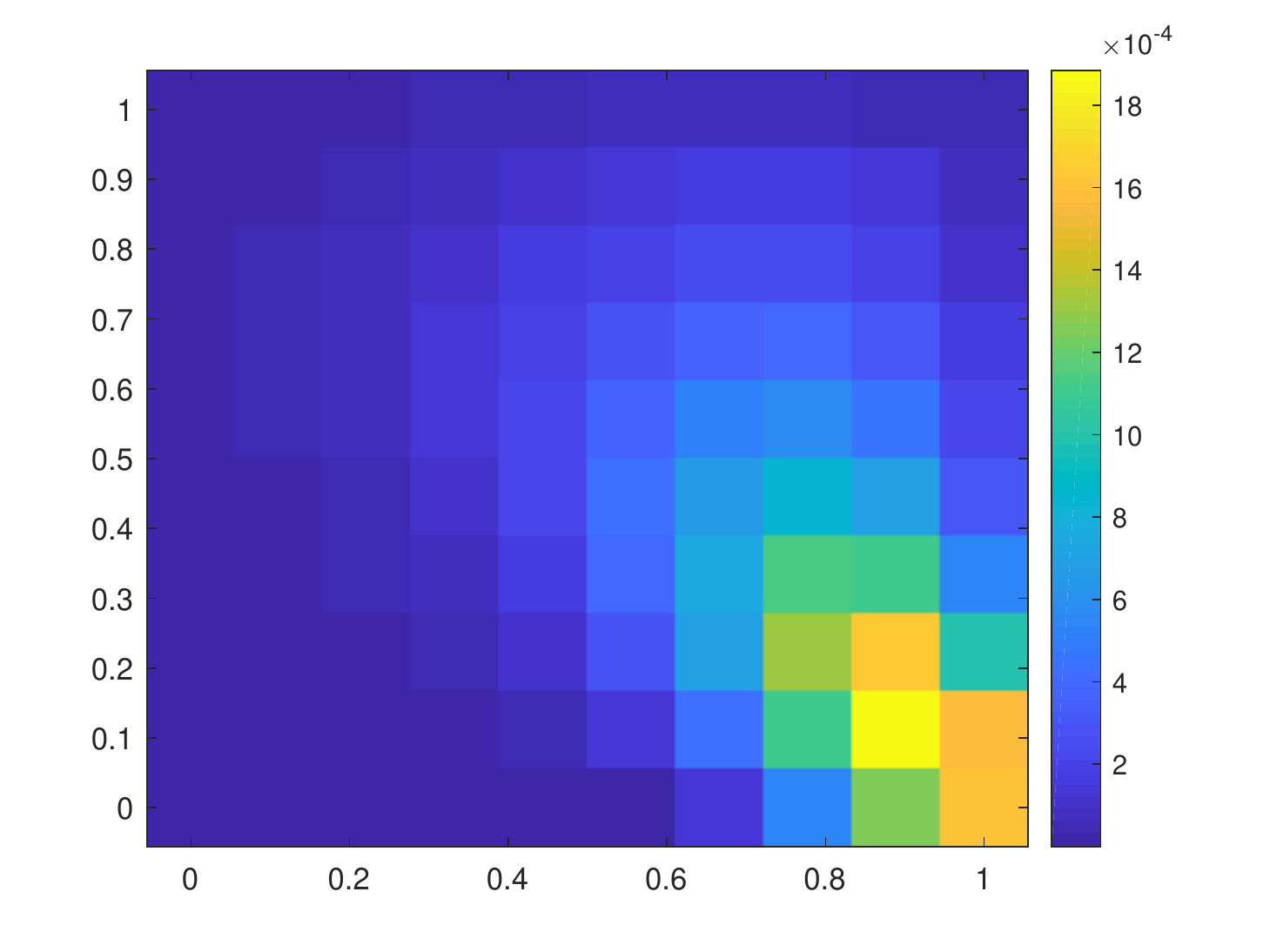}
\caption{Computational results for the second example. Top Left: upscaled solution at $T=0.005$ (matrix), Top Middle:  upscaled solution at $T=0.01$ (matrix),
Top Right:  upscaled solution at $T=0.02$ (matrix),
Bottom Left: upscaled solution at $T=0.005$ (channel), Bottom Middle:  upscaled solution at $T=0.01$ (channel),
Bottom Right:  upscaled solution at $T=0.02$ (channel).}
\label{fig:results}
\end{figure}

\subsection{Nonlinear example with machine learning}

In this section, we present numerical results for the proposed method. 
Our examples use some of the tools developed in
\cite{leung2019space,vasilyeva2019learning}. The goal of this example
is to use machine learning to compute macroscale parameters based on RVE
simulations. The method is similar to our previous approach 
\cite{leung2019space,vasilyeva2019learning}; however, the calculations
are performed in RVEs.

We consider nonlinear flow problem (unsaturated flow problem)  in fractured media $\Omega = [0, 1] \times  [0, 1]$ with no flux boundary conditions. 
We set source terms $q_f = 10^5$  in the fracture continuum in  $[0.95, 1.0] \times [0.95, 1.0]$ . 
We use $10 \times 10$ coarse grid. 
For the nonlinear coefficient, we use $k^{\alpha}(x, u) =  k_r(u) k^{\alpha}_s(x)$ with $k_r(u) = \exp(-a |u|)$, $a = 0.1$ ($\alpha = m,f$).
We set $c^m = 1$, $c^f = 1$, $k_s^f = 10^3$, $k_m = 1$ and $T_{max} = 0.025$ with 50 time steps.
The numerical calculations of the effective properties has been implemented with the open-source finite element software PETSc and FEniCS \cite{fenics, logg2012automated, balay2004petsc}. Machine Learning algorithm is implemented using Keras library.

\begin{figure}[h!]
\begin{center}
\includegraphics[width=0.9\linewidth]{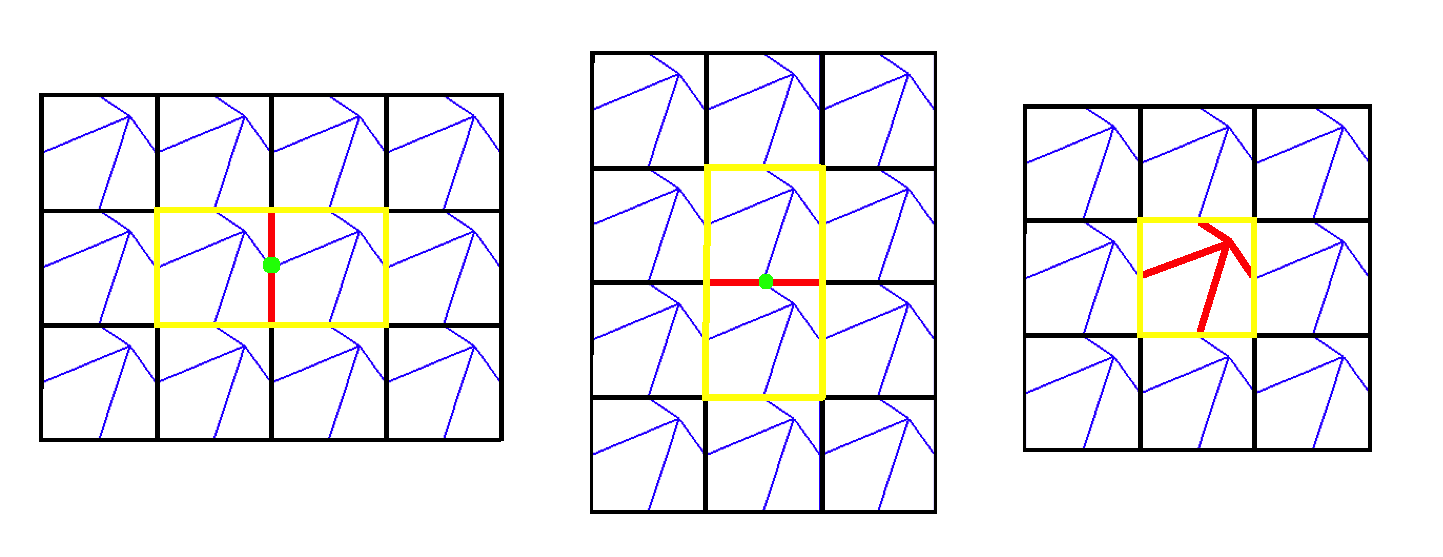}
\end{center}
\caption{Local domains to generate datasets. 
Left: $NN_1$ ($T^{mm}$ on edge (red color)) and $NN_4$ ($T^{ff}$ on point (green color)). 
Left: $NN_2$ ($T^{mm}$ on edge (red color)) and $NN_5$ ($T^{ff}$ on point (green color)).
Left: $NN_3$ ($T^{mf}$ on fracture interface (red color)).}
\label{fig:omega}
\end{figure}

\begin{table}[h!]
\begin{center}
\begin{tabular}{ | c | c c c | }
\hline
& MSE & RMSE  (\%)  & MAE (\%) \\
\hline
$NN_1$ 	& 0.0358 & 1.8929 & 1.5824 \\
$NN_2$ 	& 0.0895 & 2.9930 & 2.4455 \\
$NN_3$ 	& 0.0006 & 0.2544 & 0.2309 \\
$NN_4$ 	& 0.0163 & 1.2801 & 1.2313 \\
$NN_5$ 	& 0.0114 & 1.0686 & 0.9461 \\
\hline
\end{tabular}
\end{center}
\caption{Learning performance of machine learning algorithm }
\label{tab:ml}
\end{table}

\begin{figure}[h!]
\begin{center}
\includegraphics[width=0.32\linewidth]{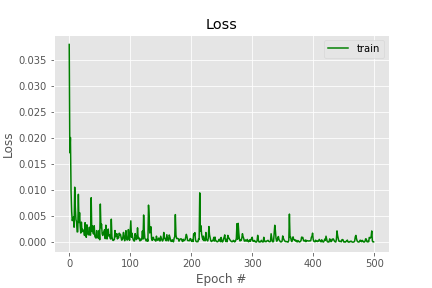}
\includegraphics[width=0.32\linewidth]{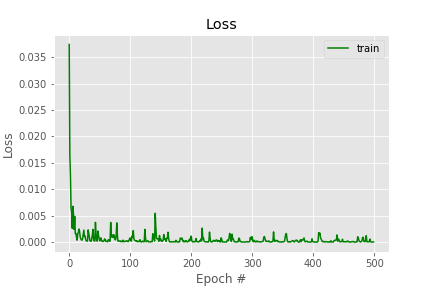}
\includegraphics[width=0.32\linewidth]{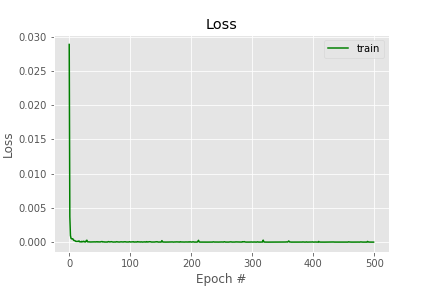}
\includegraphics[width=0.32\linewidth]{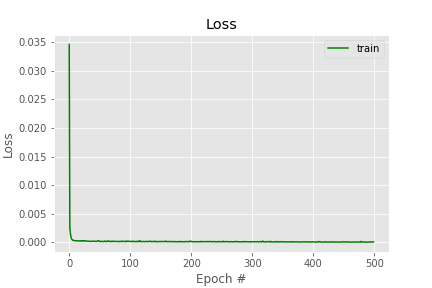}
\includegraphics[width=0.32\linewidth]{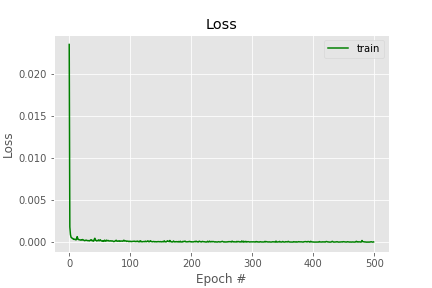}
\end{center}
\caption{Learning performance of machine learning algorithm. Loss functions. 
Left: $NN_1$ ($T^{mm}$) and $NN_4$ ($T^{ff}$. 
Left: $NN_2$ ($T^{mm}$) and $NN_5$ ($T^{ff}$).
Left: $NN_3$ ($T^{mf}$).}
\label{fig:ml}
\end{figure}

\begin{figure}[h!]
\centering
\includegraphics[width=0.49\linewidth]{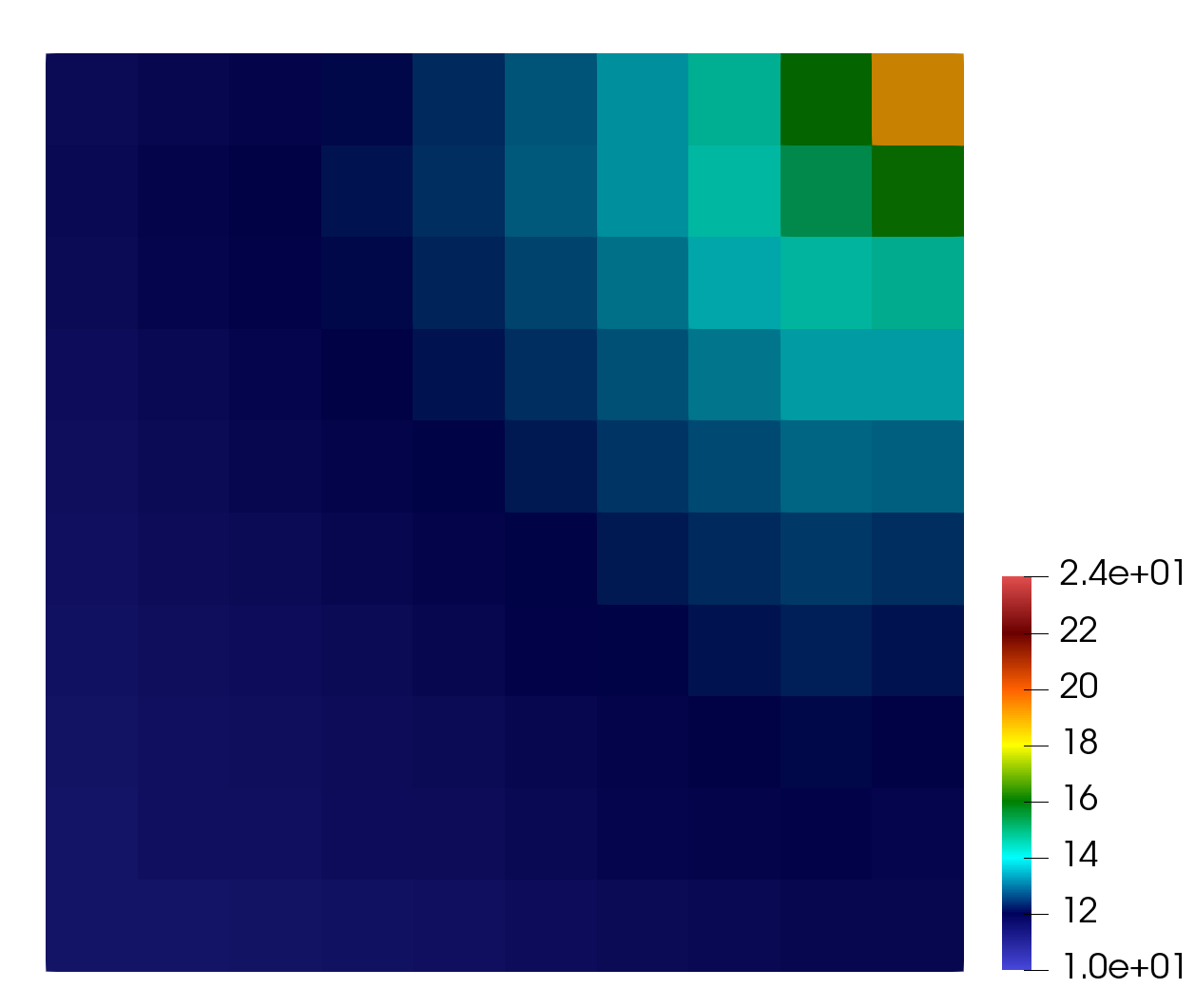}
\includegraphics[width=0.49\linewidth]{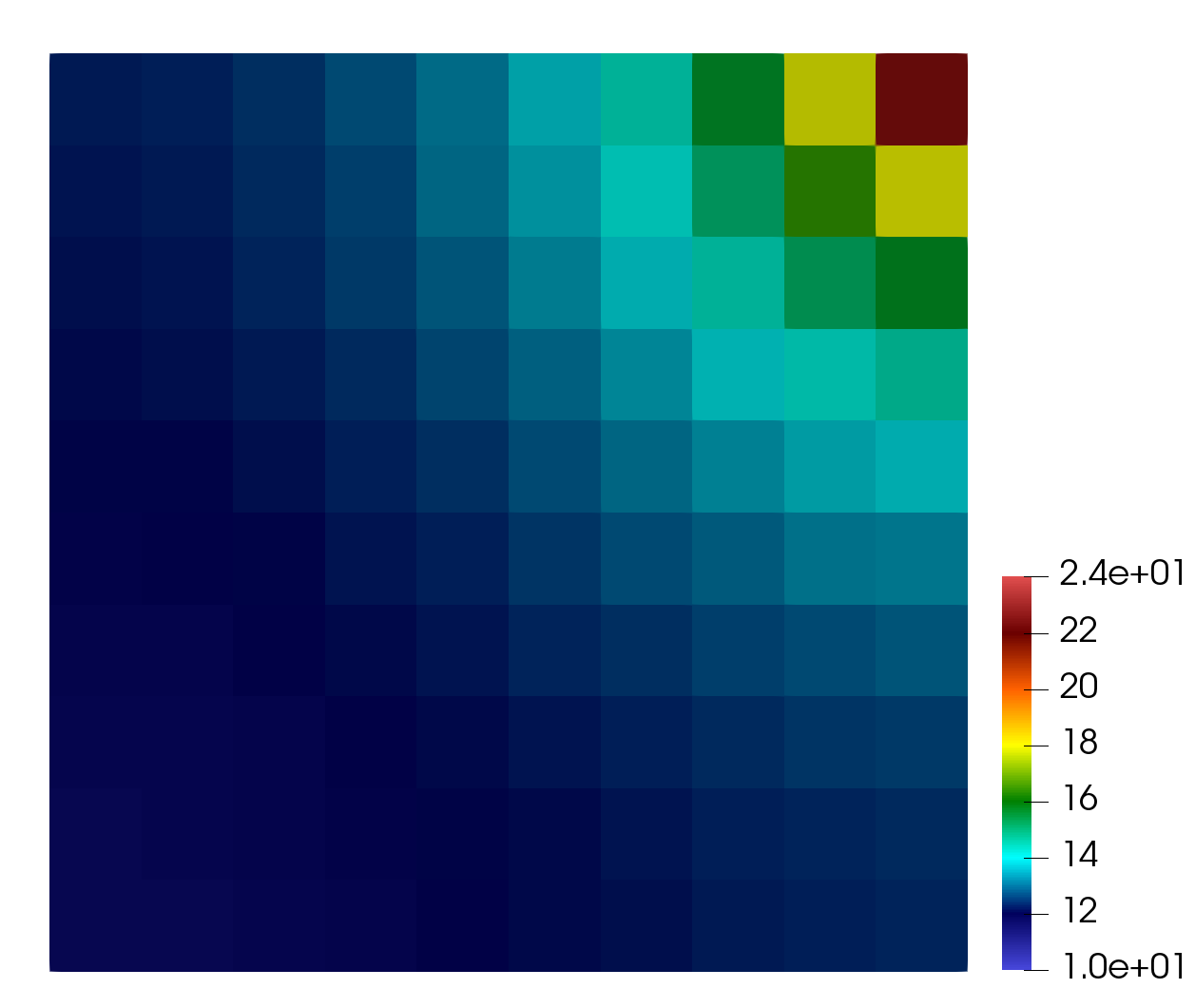}
\caption{Solution on final time $t_m$, $m = 50$. Left: $u_m$. Right: $u_f$}
\label{fig:uu}
\end{figure}

Each sample $X_l$ contains information about coarse grid solution in oversampled local domain
\[
X_l = (X_{l+}^{\overline{u}^{m}}, X_{l+}^{\overline{u}^{f}}),
\]
and output
\[
\text{\textit{Test 1}:} \, 
Y_l = (T_l^{\alpha \beta, NL} ), \quad \alpha,\beta = m, f.
\]

For the training of the neural networks, we use a dataset generated via solution of the local problems in oversamples local domains (see Figure \ref{fig:omega}).   
We train four neural networks for each type of transmissibility: $NN_1$ for horizontal coarse edges for matrix-matrix flow, $NN_2$ for vertical coarse edges s for matrix-matrix flow, $NN_3$ for matrix - fracture flow and $NN_4$ and $NN_5$ for fracture - fracture flow (Figure \ref{fig:omega}).
For calculations, we use 500 epochs with a batch size $N_b = 100$ and Adam optimizer with learning rate $\epsilon = 0.001$.
For accelerating of the training process of the multi-input CNN, we use GPU (GeForce GTX 1060).
We use  $3 \times 3$ convolutions with RELU activation. For each input data, we have 3 layers of CNN with two final fully connected layer. Convolution layer contains  4, 8 and 16 feature maps . We use dropout with rate 10 \% in each layer in order to prevent over-fitting. 
Finally, we combine CNN output and perform three additional fully connected layers with size 200, 50 and 1(one final output).
Presented algorithm is used to learn dependence between multi-input data and upscaled nonlinear transmissibilities.

For error calculation on the dataset, we used  mean square errors, relative mean absolute and relative root mean square errors
\[
MSE = \sum_i |Y_i - \tilde{Y}_i|^2,
\quad
RMSE = \sqrt{ \frac{\sum_i |Y_i - \tilde{Y}_i|^2 }{\sum_i |Y_i|^2 } },
\quad
MAE = \frac{\sum_i |Y_i - \tilde{Y}_i|  }{\sum_i |Y_i|},
\]
where $Y_i$  and $\tilde{Y}_i$ denotes reference and predicted values for sample $X_i$ 
Learning performance for neural networks are presented in Table \ref{tab:ml}. Loss function (MSE) is presented in Figure \ref{fig:ml}. 
We observe a good convergence with small error for each neural network. 
In Figure \ref{fig:uu}, we depict solution of the problem, $u_m$ and $u_f$.

\section*{Acknowledgements}

The research of Eric Chung is partially supported by the Hong Kong RGC General Research Fund (Project numbers 14304217 and 14302018)
and CUHK Faculty of Science Direct Grant 2018-19.
YE would like to thank the partial support from NSF 1620318. YE would also like to acknowledge the support of Mega-grant of the Russian Federation Government (N 14.Y26.31.0013)

\bibliographystyle{plain}
\bibliography{references,references1,references2}

\begin{thebibliography}{10}

\bibitem{abe07}
A.~Abdulle and B.~Engquist.
\newblock Finite element heterogeneous multiscale methods with near optimal
  computational complexity.
\newblock {\em SIAM J. Multiscale Modeling and Simulation}, 6(4):1059--1084,
  2007.

\bibitem{balay2004petsc}
Satish Balay, Kris Buschelman, Victor Eijkhout, William~D Gropp, Dinesh
  Kaushik, Matthew~G Knepley, Lois~Curfman McInnes, Barry~F Smith, and Hong
  Zhang.
\newblock Petsc users manual.
\newblock Technical report, Technical Report ANL-95/11-Revision 2.1. 5, Argonne
  National Laboratory, 2004.

\bibitem{brown2013efficient}
Donald~L Brown, Yalchin Efendiev, and Viet~Ha Hoang.
\newblock An efficient hierarchical multiscale finite element method for stokes
  equations in slowly varying media.
\newblock {\em Multiscale Modeling \& Simulation}, 11(1):30--58, 2013.

\bibitem{cances2015embedded}
Eric Cances, Virginie Ehrlacher, Fr{\'e}d{\'e}ric Legoll, and Benjamin Stamm.
\newblock An embedded corrector problem to approximate the homogenized
  coefficients of an elliptic equation.
\newblock {\em Comptes Rendus Mathematique}, 353(9):801--806, 2015.

\bibitem{chen2019homogenize}
Jie Chen, Shuyu Sun, and Zhengkang He.
\newblock Homogenize coupled stokes--cahn--hilliard system to darcy's law for
  two-phase fluid flow in porous medium by volume averaging.
\newblock {\em Journal of Porous Media}, 22(1), 2019.

\bibitem{chen2019homogenization}
Jie Chen, Shuyu Sun, and Xiaoping Wang.
\newblock Homogenization of two-phase fluid flow in porous media via volume
  averaging.
\newblock {\em Journal of Computational and Applied Mathematics}, 353:265--282,
  2019.

\bibitem{MixedGMsFEM}
E.~Chung, Y.~Efendiev, and C.~Lee.
\newblock Mixed generalized multiscale finite element methods and applications.
\newblock {\em SIAM Multicale Model. Simul.}, 13:338--366, 2014.

\bibitem{WaveGMsFEM}
E.~Chung, Y.~Efendiev, and W.~T. Leung.
\newblock Generalized multiscale finite element method for wave propagation in
  heterogeneous media.
\newblock {\em SIAM Multicale Model. Simul.}, 12:1691--1721, 2014.

\bibitem{chung2018constraintmixed}
Eric Chung, Yalchin Efendiev, and Wing~Tat Leung.
\newblock Constraint energy minimizing generalized multiscale finite element
  method in the mixed formulation.
\newblock {\em Computational Geosciences}, 22(3):677--693, 2018.

\bibitem{NLMC}
Eric~T Chung, Efendiev, Wing~Tat Leung, Maria Vasilyeva, and Yating Wang.
\newblock Non-local multi-continua upscaling for flows in heterogeneous
  fractured media.
\newblock {\em arXiv preprint arXiv:1708.08379}, 2018.

\bibitem{chung2018nonlinear}
Eric~T Chung, Yalchin Efendiev, Wing~T Leung, and Mary Wheeler.
\newblock Nonlinear nonlocal multicontinua upscaling framework and its
  applications.
\newblock {\em International Journal for Multiscale Computational Engineering},
  16(5), 2018.

\bibitem{chung2018constraintCMAME}
Eric~T Chung, Yalchin Efendiev, and Wing~Tat Leung.
\newblock Constraint energy minimizing generalized multiscale finite element
  method.
\newblock {\em Computer Methods in Applied Mechanics and Engineering},
  339:298--319, 2018.

\bibitem{chung2018fast}
Eric~T Chung, Yalchin Efendiev, and Wing~Tat Leung.
\newblock Fast online generalized multiscale finite element method using
  constraint energy minimization.
\newblock {\em Journal of Computational Physics}, 355:450--463, 2018.

\bibitem{chung2015goal}
Eric~T Chung, Wing~Tat Leung, and Sara Pollock.
\newblock Goal-oriented adaptivity for {GM}s{FEM}.
\newblock {\em Journal of Computational and Applied Mathematics}, pages
  625--637, 2015.

\bibitem{ee03}
W.~E and B.~Engquist.
\newblock Heterogeneous multiscale methods.
\newblock {\em Comm. Math. Sci.}, 1(1):87--132, 2003.

\bibitem{GMsFEM13}
Y.~Efendiev, J.~Galvis, and {T. Y.} Hou.
\newblock Generalized multiscale finite element methods (gmsfem).
\newblock {\em Journal of Computational Physics}, 251:116--135, 2013.

\bibitem{fafalis2018computational}
Dimitrios Fafalis and Jacob Fish.
\newblock Computational continua for linear elastic heterogeneous solids on
  unstructured finite element meshes.
\newblock {\em International Journal for Numerical Methods in Engineering},
  115(4):501--530, 2018.

\bibitem{fish2010computational}
Jacob Fish and Sergey Kuznetsov.
\newblock Computational continua.
\newblock {\em International Journal for Numerical Methods in Engineering},
  84(7):774--802, 2010.

\bibitem{fish2005multiscale}
Jacob Fish and Zheng Yuan.
\newblock Multiscale enrichment based on partition of unity.
\newblock {\em International Journal for Numerical Methods in Engineering},
  62(10):1341--1359, 2005.

\bibitem{fu2019edge}
Shubin Fu, Eric Chung, and Guanglian Li.
\newblock Edge multiscale methods for elliptic problems with heterogeneous
  coefficients.
\newblock {\em Journal of Computational Physics}, 2019.

\bibitem{gao2015generalized}
Kai Gao, Shubin Fu, Richard~L Gibson~Jr, Eric~T Chung, and Yalchin Efendiev.
\newblock Generalized multiscale finite-element method (gmsfem) for elastic
  wave propagation in heterogeneous, anisotropic media.
\newblock {\em Journal of Computational Physics}, 295:161--188, 2015.

\bibitem{hkj12}
H.~Hajibeygi, D.~Kavounis, and P.~Jenny.
\newblock A hierarchical fracture model for the iterative multiscale finite
  volume method.
\newblock {\em Journal of Computational Physics}, 230(4):8729--8743, 2011.

\bibitem{henning2012localized}
Patrick Henning, Axel Malqvist, and Daniel Peterseim.
\newblock A localized orthogonal decomposition method for semi-linear elliptic
  problems.
\newblock {\em arXiv preprint arXiv:1211.3551}, 2012.

\bibitem{hs05}
V.H. Hoang and C.~Schwab.
\newblock High dimensional finite elements for elliptic problems with multiple
  scales.
\newblock {\em SIAM Multiscale Modeling and Simulation}, 3:168--194, 2004.

\bibitem{hw97}
T.~Hou and X.H. Wu.
\newblock A multiscale finite element method for elliptic problems in composite
  materials and porous media.
\newblock {\em J. Comput. Phys.}, 134:169--189, 1997.

\bibitem{jennylt03}
P.~Jenny, S.H. Lee, and H.~Tchelepi.
\newblock Multi-scale finite volume method for elliptic problems in subsurface
  flow simulation.
\newblock {\em J. Comput. Phys.}, 187:47--67, 2003.

\bibitem{jennylt05}
P.~Jenny, S.H. Lee, and H.~Tchelepi.
\newblock Adaptive multi-scale finite volume method for multi-phase flow and
  transport in porous media.
\newblock {\em SIAM J. Multiscale Modeling and Simulation}, 3:30--64, 2004.

\bibitem{Jikov91}
V.~V. Jikov, S.~M. Kozlov, and O.~A. Oleinik.
\newblock {\em Homogenization of Differential Operators and Integral
  Functionals}.
\newblock Springer-Verlag, 1991.

\bibitem{le2014msfem}
Claude Le~Bris, Fr{\'e}d{\'e}ric Legoll, and Alexei Lozinski.
\newblock An msfem type approach for perforated domains.
\newblock {\em Multiscale Modeling \& Simulation}, 12(3):1046--1077, 2014.

\bibitem{le2014multiscale}
Claude Le~Bris, Fr{\'e}d{\'e}ric Legoll, and Florian Thomines.
\newblock Multiscale finite element approach for weakly random problems and
  related issues.
\newblock {\em ESAIM: Mathematical Modelling and Numerical Analysis},
  48(3):815--858, 2014.

\bibitem{leung2019space}
Wing~T Leung, Eric~T Chung, Yalchin Efendiev, Maria Vasilyeva, and Mary
  Wheeler.
\newblock Space-time nonlinear upscaling framework using non-local
  multi-continuum approach.
\newblock {\em arXiv preprint arXiv:1908.05582, to appear in International
  Journal of Multiscale Engineering}, 2019.

\bibitem{fenics}
Anders Logg, Kent-Andre Mardal, and Garth Wells.
\newblock {\em Automated solution of differential equations by the finite
  element method: The FEniCS book}, volume~84.
\newblock Springer Science \& Business Media, 2012.

\bibitem{logg2012automated}
Anders Logg, Kent-Andre Mardal, and Garth Wells.
\newblock {\em Automated solution of differential equations by the finite
  element method: The FEniCS book}, volume~84.
\newblock Springer Science \& Business Media, 2012.

\bibitem{oz06_1}
H.~Owhadi and L.~Zhang.
\newblock Metric-based upscaling.
\newblock {\em Comm. Pure. Appl. Math.}, 60:675--723, 2007.

\bibitem{rk07}
A.J. Roberts and I.~Kevrekidis.
\newblock General tooth boundary conditions for equation free modeling.
\newblock {\em SIAM J. Sci. Comput.}, 29(4):1495--1510, 2007.

\bibitem{salama2017flow}
Amgad Salama, Shuyu Sun, Mohamed~F El~Amin, Yi~Wang, and Kundan Kumar.
\newblock Flow and transport in porous media: A multiscale focus.
\newblock {\em Geofluids}, 2017, 2017.

\bibitem{skr06}
G.~Samaey, I.G. Kevrekidis, and D.~Roose.
\newblock Patch dynamics with buffers for homogenization problems.
\newblock {\em J. Comput. Phys.}, 213(1):264--287, 2006.

\bibitem{srk05}
G.~Samaey, D.~Roose, and I.G. Kevrekidis.
\newblock The gap-tooth scheme for homogenization problems.
\newblock {\em SIAM J. Multiscale Modeling and Simulation}, 4(1):278--306,
  2005.

\bibitem{tan2019high}
Wee~Chin Tan and Viet~Ha Hoang.
\newblock High dimensional finite element method for multiscale nonlinear
  monotone parabolic equations.
\newblock {\em Journal of Computational and Applied Mathematics}, 345:471--500,
  2019.

\bibitem{vasilyeva2019learning}
Maria Vasilyeva, Wing~T Leung, Eric~T Chung, Yalchin Efendiev, and Mary
  Wheeler.
\newblock Learning macroscopic parameters in nonlinear multiscale simulations
  using nonlocal multicontinua upscaling techniques.
\newblock {\em arXiv preprint arXiv:1907.02921}, 2019.

\end{thebibliography}

\end{document}